\newtheorem{thm}{Theorem}[section]
\newtheorem{prp}[thm]{Proposition}
\newtheorem{dfn}[thm]{Definition}
\newtheorem{lmm}[thm]{Lemma}
\newtheorem{crl}[thm]{Corollary}
\def\Oo{{\mathcal O}}
\def\Sc{{\rm S}}
\def\Br{{\rm Br}}
\def\Aut{{\rm Aut}}
\def\End{{\rm End}}
\def\Inj{{\rm Inj}}
\def\Ker{{\rm Ker}}
\def\Hom{{\rm Hom}}
\def\Tr#1#2{{\rm Tr}_{#1}^{#2}}
\def\Hom{{\rm Hom}}
\def\Ind#1#2{{\rm Ind}_{#1}^{#2}}
\def\Res#1#2{{\rm Res}_{#1}^{#2}}
\def\conj{{\rm c}}
\def\Ff{{\mathcal F}}
\def\siB{b}
\def\Oo{{\mathcal O}}
\def\proof{{\it Proof. }}
\begin{document}

\title [Saturated fusion systems and  Brauer indecomposability]
{On   saturated fusion systems and Brauer indecomposability of  Scott modules}
\author[R. Kessar]{Radha Kessar, Naoko Kunugi, Naofumi Mitsuhashi}

\address{Radha Kessar\\
Institute of Mathematics\\
University  of Aberdeen \\
Fraser Noble Building \\
Aberdeen AB24 3UE\\
U.K.}
\email{kessar@maths.abdn.ac.uk}
\address{Naoko Kunugi\\
Department of Mathematics\\
Tokyo University of Science\\
1-3 Kagurazaka\\
Shinjuku-ku\\
Tokyo 162-8601\\
Japan}
\email{kunugi@rs.kagu.tus.ac.jp}
\address{Naofumi Mitsuhashi\\ 
Department of Mathematics\\
Tokyo University of Science\\
1-3 Kagurazaka\\
Shinjuku-ku\\
Tokyo 162-8601\\
Japan}
\maketitle
\maketitle

\begin{abstract}
Let $p$ be a prime number, $G$  a finite group,
$P$ a $p$-subgroup of $G$ and  $k$   an algebraically closed  field of characteristic $p$.  
We  study the   relationship between  the  category $\Ff_P(G)$ and  
the behavior  of  
$p$-permutation $kG$-modules  with vertex $P$ under the Brauer construction.  
We give  a  sufficient condition  for   $\Ff_P(G)$ to be a saturated 
fusion system.   We   prove that   for Scott modules with   abelian vertex,  our condition is
also necessary.  In order to obtain our results,  we prove a criterion  for the categories arising from the 
data  of $(b, G)$-Brauer pairs   in the sense of  Alperin-Brou\'e and 
Brou\'e-Puig  to  be saturated fusion systems  on the underlying 
$p$-group.

\end{abstract}

\section{Introduction}  \label{intro}
Let $p$  be a prime   number and $k$   an algebraically closed field of 
characteristic $p$.  For a finite group $G$,  a $p$-subgroup  $Q$ 
of $G$, and a  finite dimensional  $kG$-module $M$,   the  Brauer quotient
$M(Q)$ of  $M$  with respect to $Q$,  is naturally a  $kN_G(Q)/Q$-module  
and hence  by restriction   is a $kQC_G(Q)/Q$   module 
(see  \cite{broue1}, \cite{brouepuig}, \cite[Section 11]{thevenaz}).  
We  will  say that $M$ is {\it Brauer indecomposable} if  for any $p$-subgroup $Q$  of $G$,  $M(Q)$ 
is   indecomposable  (or zero) as $kQC_G(Q)/Q$-module.

For subgroups $Q, R$ of $G$, let   $\Hom_G(Q, R)$
denote the set of all  group   homomorphisms from $Q$ to $R$ which are 
induced by conjugation by some element of $G$.  For a  $p$-subgroup $P$ of 
$G$, let $\Ff_P(G)$   denote the category whose  objects  are the subgroups of 
$P$; whose   morphism set from   an object $Q$ to an object 
$R$ is the set $\Hom_G(Q, R)$,  and  where  composition of morphisms is the  
usual composition of functions.  We  prove the following result 
(for background on fusion systems and  saturated fusion systems, we refer the  reader to 
the articles \cite{brotolevioliver} and \cite{Linckelmann:2007a}; we note that  we will follow  the notational conventions in \cite{brotolevioliver}  rather than   those  of  \cite{Linckelmann:2007a} in that   all fusion systems will not be assumed to be saturated).

\begin{thm} \label{saturationresultmod}  Let   $G$ be a finite group, 
$P$ a $p$-subgroup of $G$  and  $M$  an indecomposable  
$p$-permutation $kG$-module  with vertex $P$.    If  $M$ is   
Brauer indecomposable, then 
 $\Ff_P(G)$ is a saturated fusion system. 
\end{thm}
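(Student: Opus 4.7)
The plan is to associate to $M$ a coherent system of $b$-Brauer pairs (where $b$ is the block of $kG$ to which $M$ belongs), identify $\Ff_P(G)$ with the fusion system $\Ff_{(P,e_P)}(G,b)$ determined by this data, and then invoke the general saturation criterion for Brauer-pair categories announced in the abstract.

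Let $b$ denote the unique block of $kG$ with $bM=M$. Since $M$ has vertex $P$ we have $M(P)\neq 0$, and for each $Q\leq P$ the hypothesis of Brauer indecomposability ensures that $M(Q)$ is indecomposable as a $kQC_G(Q)/Q$-module. This lets me attach to each $Q\leq P$ a \emph{unique} block $e_Q$ of $kC_G(Q)$ characterized by $e_Q\cdot M(Q)\neq 0$. The first technical step is to verify that $(P,e_P)$ is a $b$-Brauer pair and that the family $\{(Q,e_Q)\}_{Q\leq P}$ is compatible in the Alperin--Brou\'e sense, i.e.\ $(Q,e_Q)\leq(P,e_P)$ whenever $Q\leq P$. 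Compatibility follows from the functoriality of the Brauer construction along inclusions together with the uniqueness of $e_Q$ as the only block acting nontrivially on $M(Q)$.

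The second step is to identify $\Ff_P(G)$ with $\Ff_{(P,e_P)}(G,b)$. The inclusion $\Ff_{(P,e_P)}(G,b)\subseteq\Ff_P(G)$ is automatic. For the reverse inclusion, if $g\in G$ satisfies ${}^gQ\leq R\leq P$, then conjugation by $g$ sends $e_QM(Q)$ isomorphically onto $({}^ge_Q)M({}^gQ)$, which is nonzero; uniqueness forces ${}^ge_Q=e_{{}^gQ}$, and the compatibility from step one then gives $({}^gQ,{}^ge_Q)\leq(R,e_R)$, so that $c_g\colon Q\to R$ is a morphism in $\Ff_{(P,e_P)}(G,b)$. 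Thus the two categories coincide.

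Finally, I would apply the general saturation criterion for $(b,G)$-Brauer pair categories to $\Ff_{(P,e_P)}(G,b)$. The main obstacle here is the verification of the hypothesis of that criterion, which I expect to take the form of a Sylow/$p'$-condition on the normalizer quotients $N_G(Q,e_Q)/QC_G(Q)$ for appropriate $Q\leq P$. The input for this should be extracted from the fact that $M$ is $p$-permutation with vertex \emph{exactly} $P$: the Brauer quotient $M(P)$ is a nonzero indecomposable $p$-permutation $kN_G(P)/P$-module, and combining this with the indecomposability of $M(Q)$ for all $Q\leq P$ (together with standard $p$-permutation/Green-correspondence arguments and the uniqueness of $e_Q$) should yield the required control on the automizers. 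Once the hypothesis is established, the criterion delivers saturation of $\Ff_{(P,e_P)}(G,b)$, and hence of $\Ff_P(G)$ by the identification in step two.
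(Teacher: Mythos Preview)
Your plan has a genuine gap in the choice of $G$-algebra. You work with $A=kG$ and the block $b$ containing $M$, aiming to identify $\Ff_P(G)$ with a Brauer-pair category $\Ff_{(P,e_P)}(kG,b,G)$ and then apply the saturation criterion announced in the abstract. But that criterion (Theorem~\ref{saturationresult}) is stated for the fusion system at a \emph{maximal} $(A,b,G)$-Brauer pair, and for the triple $(kG,b,G)$ the maximal pairs have first component a \emph{defect group} of $b$, not the vertex $P$ of $M$. When $P$ is strictly smaller than a defect group, $(P,e_P)$ is not maximal and the criterion does not apply to the category you have built; when $P$ \emph{is} a defect group, the triple $(kG,b,G)$ is a saturated triple automatically (blocks of $kC_G(Q)$ are primitive in $Z(kC_G(Q))$), so the block fusion system is saturated without any reference to $M$ or to Brauer indecomposability. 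Either way the hypothesis on $M$ never enters. Your guess that the criterion takes the form of a $p'$-condition on $N_G(Q,e_Q)/QC_G(Q)$ is also off: the actual hypotheses are that $b$ be central in $A$ and that each $e_Q$ be primitive in $A(Q)^{C_G(Q,e_Q)}$, conditions which for $A=kG$ hold regardless of $M$.

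The paper instead takes $A=\End_k(M)$ and $b=1_A=\id_M$ (Proposition~\ref{connection}). Then $A(Q)\cong\End_k(M(Q))$ is a matrix algebra with unique block $1_{A(Q)}$, so the $(A,1_A,G)$-Brauer pairs are simply the pairs $(Q,1_{A(Q)})$ with $M(Q)\ne 0$; the pair $(P,1_{A(P)})$ is maximal precisely because $P$ is a vertex of $M$; and $\Ff_{(P,1_{A(P)})}(A,1_A,G)=\Ff_P(G)$ trivially, since every conjugation map preserves the only available block. The saturated-triple condition now reads: $1_{A(Q)}$ is primitive in $\End_k(M(Q))^{C_G(Q)}$, i.e.\ $M(Q)$ is indecomposable as $kQC_G(Q)/Q$-module --- exactly Brauer indecomposability. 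Theorem~\ref{saturationresult} then gives the result in one line.
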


The question of  Brauer indecomposability   of  $p$-permutation modules 
(or rather bimodules) plays a role  in   the ``glueing processes'' used  
for   proving   categorical equivalences  between $p$-blocks  of finite groups 
as predicted   by Brou\'e's abelian defect group conjecture 
(see \cite {Kosh/Kun/Waki:2004}, \cite  {Koshitani/Linckelmann:2005}).   
Since  splendid equivalences  between blocks  preserve local structure, 
it is  not unexpected that  there  is a connection between saturation 
and  the  Brauer indecomposability condition. 
Theorem \ref{saturationresultmod}   provides a neat formulation  of the 
connection.

The  converse of  Theorem \ref{saturationresultmod}
does not hold  in general (see remarks after the proof of  Theorem 
\ref{saturationresultmod}).  However,  
in the special case that $M$ is a Scott module, there seems to be 
some control in the reverse direction.  For the definition 
and properties of  Scott modules  we refer the reader to  \cite{broue1}.     
For a finite group $G$  and a $p$-subgroup  $P$ of $G$, we denote by   $\Sc_P(G, k)$   the  $kG$-Scott module  with vertex $P$.

\begin{thm} \label{Scottcyclic} 
Let $P$ be  an abelian $p$-subgroup of  a finite group $G$.   If $\Ff_P(G)$ 
is a saturated fusion system then  $ \Sc_P(G, k)$ is  Brauer indecomposable.   
\end{thm}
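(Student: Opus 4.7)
The plan is to show that $M := \Sc_P(G,k)$ satisfies the condition that $M(Q)$ is indecomposable or zero as a $k[QC_G(Q)/Q]$-module for every $p$-subgroup $Q$ of $G$. Since $M$ has vertex $P$, the Brauer quotient $M(Q)$ vanishes unless $Q$ is $G$-conjugate to a subgroup of $P$. After conjugating, we may assume $Q \leq P$, in which case $Q \leq P \leq C_G(Q)$ (because $P$ is abelian), and in particular $QC_G(Q) = C_G(Q)$.

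The next step is to translate the saturation of $\Ff_P(G)$ into two concrete facts, heavily exploiting that $P$ is abelian. Because $P$ abelian forces $\Aut_P(Q) = 1$ and $N_P(Q) = P$ for every $Q \leq P$, every such $Q$ is fully normalized in $\Ff_P(G)$, and the Sylow axiom then gives
$$ p \nmid [N_G(Q) : C_G(Q)]. $$
Applying the extension axiom to each $c_g : Q \hookrightarrow P$ arising from $g \in G$ with $g^{-1}Qg \leq P$ shows that $c_g$ extends to an automorphism of $P$ realized by conjugation by some $n \in N_G(P)$; this is equivalent to
$$ \{ g \in G : g^{-1} Q g \leq P \} \;=\; C_G(Q) \cdot N_G(P), $$
that is, $N_G(P)$ controls $G$-fusion of subgroups of $P$.

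With these in hand, compute the Brauer construction of $k\uparrow_P^G$ at $Q$ using the identity $(kX)(Q) = k[X^Q]$ for a $G$-set $X$ and the description $(G/P)^Q = \{gP : g^{-1}Qg \leq P\}$. The fusion-control identity and the containment $P \leq N_G(Q)$ imply that every $N_G(Q)$-orbit on $(G/P)^Q$ has stabilizer $P$, and that there are exactly $N := [N_G(P) : N_G(P) \cap N_G(Q)]$ such orbits. Hence as $k[N_G(Q)/Q]$-modules
$$ \bigl(k\uparrow_P^G\bigr)(Q) \;\cong\; \bigoplus_{i=1}^{N} k\uparrow_{P/Q}^{N_G(Q)/Q}, $$
a direct sum of identical transitive permutation modules. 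Since $M \mid k\uparrow_P^G$, $M(Q)$ is a summand of this direct sum. To identify it, use that iterating Brauer constructions along $Q \leq P$ shows that the summand of $M(Q)$ with vertex $P/Q$ has Brauer quotient at $P/Q$ matching that of $\Sc_{P/Q}(N_G(Q)/Q, k)$; the Brou\'e--Puig correspondence for indecomposable $p$-permutation modules then identifies this summand as $\Sc_{P/Q}(N_G(Q)/Q, k)$. The delicate point is ruling out any additional summands of $M(Q)$ (either extra copies of this Scott module or summands of strictly smaller vertex), which is done by tracking the unique trivial quotient of $M$ through the decomposition.

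Finally, upgrade indecomposability as a $k[N_G(Q)/Q]$-module to indecomposability as a $k[C_G(Q)/Q]$-module. Since $[N_G(Q)/Q : C_G(Q)/Q]$ is coprime to $p$ by the first fact above and $P/Q \leq C_G(Q)/Q$, a standard Mackey-plus-Green-correspondence argument for Scott modules yields
$$ \Sc_{P/Q}\bigl(N_G(Q)/Q, k\bigr)\big|_{C_G(Q)/Q} \;=\; \Sc_{P/Q}\bigl(C_G(Q)/Q, k\bigr), $$
which is indecomposable by definition. The main obstacle I anticipate is the identification of $M(Q)$ as exactly one copy of the Scott module inside the Mackey decomposition: naively matching vertex and Brauer quotient does not immediately rule out extra summands, so the full strength of $N_G(P)$-fusion-control (beyond the two extracted consequences) is what is needed to pin the summand down.
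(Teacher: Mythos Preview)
Your two consequences of saturation are correct and are indeed the structural inputs needed. But the gap you flag is genuine and is precisely where the proof lives: ``tracking the unique trivial quotient of $M$'' does not identify $M(Q)$ as a single Scott module. At best it shows that $\Sc_{P/Q}(N_G(Q)/Q,k)$ occurs as a summand of $M(Q)$; it gives no mechanism for excluding further summands, whether extra Scott-module copies or summands with strictly smaller vertex. The Brou\'e--Puig parametrization classifies an indecomposable $p$-permutation module by its Brauer quotient at a \emph{vertex}, so it does not by itself control the decomposition of $M(Q)$ for a general $Q\le P$. The paper closes this gap by downward induction on $|P:Q|$ together with the Burry--Carlson--Puig theorem: writing $\Res{N_G(Q)}{G} M = \Sc_P(N_G(Q),k)\oplus X$, one shows $X(Q)=0$ because any indecomposable summand of $X$ with nonzero $Q$-Brauer quotient would have a vertex $R$ with $Q<R$ (Burry--Carlson--Puig excludes $R=Q$, since $Q$ is not a vertex of $M$), your fusion-control fact allows one to take $R\le P$, and then both $\Sc_P(N_G(Q),k)(R)$ and $X(R)$ are nonzero, so $M(R)$ decomposes over $N_G(R)\cap N_G(Q)\ge RC_G(R)$, contradicting the inductive hypothesis.

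Your final step is also less routine than stated. Passing from indecomposability over $N_G(Q)/Q$ to indecomposability over $C_G(Q)/Q$ does not follow from the $p'$-index and Green correspondence alone; the paper's argument (Lemma~\ref{central}) uses the extension axiom once more to write $N_G(Q)=C_G(Q)\,[N_G(P)\cap N_G(Q)]$, and then shows via Mackey that any $C_G(Q)$-decomposition of $M(Q)$ forces $M(P)$ to decompose over $PC_G(P)$, which contradicts the Sylow axiom (Lemma~\ref{Sylowequiv}).
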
 

As a corollary, we obtain the  following.

\begin{crl} \label{Scottcycliccor}   Suppose that  the  finite group   $G$ 
has cyclic Sylow  
$p$-subgroups  and let $ P$ be a $p$-subgroup of $G$. Then  
$S_P(G, k)$  is Brauer indecomposable.  
\end{crl}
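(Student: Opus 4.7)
The plan is to reduce to Theorem~\ref{Scottcyclic} by verifying its hypotheses for the pair $(G,P)$. The abelian requirement is automatic, since $P$ is contained in some Sylow $p$-subgroup $S$ of $G$, which by assumption is cyclic, so $P$ is cyclic. The real work is to show that $\Ff_P(G)$ is a saturated fusion system on $P$.

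Two features of the cyclic-Sylow case drive the verification. First, every subgroup of $S$ is the unique subgroup of $S$ of its order, hence characteristic in $S$; in particular $P$ is characteristic in $S$, so $N_G(S)\leq N_G(P)$, and for any $Q\leq P$ every $\varphi\in\Hom_{\Ff_P(G)}(Q,P)$ satisfies $\varphi(Q)=Q$. Second, $S$ is abelian, so Burnside's fusion theorem applies and every morphism in $\Ff_S(G)$ is induced by an element of $N_G(S)$.

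With these in hand I would check the two saturation axioms for $\Ff:=\Ff_P(G)$. The abelianness of $P$ gives $\Aut_P(Q)=1$ and $N_\varphi=P$ for every $Q\leq P$ and every morphism $\varphi$, so the Sylow axiom reduces to the claim that $\Aut_\Ff(Q)=N_G(Q)/C_G(Q)$ is a $p'$-group, and the extension axiom reduces to the claim that every $\varphi\in\Aut_\Ff(Q)$ extends to an element of $\Aut_\Ff(P)$. For the first claim, a nontrivial $p$-element $\bar t\in N_G(Q)/C_G(Q)$ would lift, via a Sylow $p$-subgroup of the preimage of $\langle\bar t\rangle$, to a $p$-element $t\in N_G(Q)$ mapping to $\bar t$; then $Q\langle t\rangle$ is a $p$-subgroup of $G$ contained in a cyclic Sylow of $G$, hence abelian, forcing $t\in C_G(Q)$ and $\bar t=1$. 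For the second claim, Burnside produces $g\in N_G(S)$ with $c_g|_Q=\varphi$, and the inclusion $N_G(S)\leq N_G(P)$ ensures that $c_g\in\Aut_\Ff(P)$ is the required extension.

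Once $\Ff_P(G)$ is known to be saturated, Theorem~\ref{Scottcyclic} applies and yields the Brauer indecomposability of $\Sc_P(G,k)$. The step I expect to require the most care is the lifting argument showing that $N_G(Q)/C_G(Q)$ is a $p'$-group: one must pass from a $p$-element of the quotient to an honest $p$-element of $N_G(Q)$ before invoking the cyclic-Sylow hypothesis. Everything else in the verification of saturation is essentially forced by the abelianness of $P$ and by Burnside's theorem.
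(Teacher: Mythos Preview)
Your proposal is correct and follows exactly the route the paper takes: verify that $\Ff_P(G)$ is saturated and then invoke Theorem~\ref{Scottcyclic}. The paper's own proof is a one-liner that declares the saturation of $\Ff_P(G)$ ``easy to see'' without further justification, so your careful verification of the Sylow and extension axioms simply fills in details the authors chose to omit.
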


 Another consequence is  the following result,  of use  for proving 
categorical equivalences  between principal blocks of finite groups.

\begin{crl}\label{bimodulescott}
Let $G_1$ and $G_2$ be finite groups with common abelian Sylow $p$-subgroup  
$P$ and let $\Delta (P) $ be the  diagonal  subgroup 
$\{(x, x)  \, : \, x \in P\} $ of    $G_1 \times G_2 $.
 If $\Ff_P(G_1)=\Ff_P(G_2)$, then $\Sc_{\Delta(P)}(G_1 \times G_2,k)$  
is Brauer indecomposable.
\end{crl}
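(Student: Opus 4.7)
The plan is to reduce Corollary \ref{bimodulescott} to Theorem \ref{Scottcyclic} applied to the group $G=G_1\times G_2$ and the $p$-subgroup $\Delta(P)$. Since $\Delta(P)\cong P$ is abelian, the only real content is verifying that the fusion system $\Ff_{\Delta(P)}(G_1\times G_2)$ is saturated, and this should follow from the hypothesis $\Ff_P(G_1)=\Ff_P(G_2)$ together with the fact that $P$ is a Sylow $p$-subgroup of each $G_i$.

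First I would describe the subgroups and morphisms of $\Ff_{\Delta(P)}(G_1\times G_2)$ explicitly. Any subgroup of $\Delta(P)$ has the form $\Delta(Q)=\{(x,x):x\in Q\}$ for some $Q\leq P$. For $Q,R\leq P$ and $(g_1,g_2)\in G_1\times G_2$, conjugation by $(g_1,g_2)$ sends $\Delta(Q)$ into $\Delta(R)$ if and only if $g_1xg_1^{-1}=g_2xg_2^{-1}\in R$ for every $x\in Q$; equivalently, $c_{g_1}|_Q=c_{g_2}|_Q$ as homomorphisms $Q\to R$. Consequently, the map $\Delta\colon Q\mapsto \Delta(Q)$ induces an isomorphism of categories
\[
\Ff_{\Delta(P)}(G_1\times G_2)\ \cong\ \Ff_P(G_1)\cap \Ff_P(G_2),
\]
where the right-hand side denotes the subcategory of $\Ff_P(G_1)$ (equivalently of $\Ff_P(G_2)$) consisting of those morphisms which are realised by conjugation in both $G_1$ and $G_2$.

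Under the hypothesis $\Ff_P(G_1)=\Ff_P(G_2)$, every morphism induced by conjugation in $G_1$ is also induced by conjugation in $G_2$ (and vice versa), so the intersection above equals $\Ff_P(G_1)$. Since $P$ is a Sylow $p$-subgroup of $G_1$, the classical theorem of Puig asserts that $\Ff_P(G_1)$ is saturated. Hence $\Ff_{\Delta(P)}(G_1\times G_2)$ is a saturated fusion system on the abelian $p$-group $\Delta(P)$.

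Finally I would invoke Theorem \ref{Scottcyclic} directly: with $G=G_1\times G_2$ and the abelian $p$-subgroup $\Delta(P)$, saturation of $\Ff_{\Delta(P)}(G_1\times G_2)$ yields Brauer indecomposability of $\Sc_{\Delta(P)}(G_1\times G_2,k)$. The only step that requires any care is the identification of morphisms in $\Ff_{\Delta(P)}(G_1\times G_2)$ above; everything else is a direct application of the preceding theorem, so this is really a corollary in the strict sense and no serious obstacle is expected.
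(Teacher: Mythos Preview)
Your proposal is correct and follows essentially the same approach as the paper: both identify $\Ff_{\Delta(P)}(G_1\times G_2)$ with $\Ff_P(G_1)$, note that the latter is saturated because $P$ is Sylow in $G_1$, and then invoke Theorem~\ref{Scottcyclic}. You give more detail on the identification (via the intersection $\Ff_P(G_1)\cap\Ff_P(G_2)$), whereas the paper simply states it as immediate.
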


We do not know whether Theorem \ref{Scottcyclic}   holds without 
the assumption that $P$ is abelian.   Using D. Craven's construction 
in \cite{craven1} of the  Scott modules 
for  the symmetric groups   $S_n$, $ n\leq 6$, we prove the following.  

\begin{prp} \label{symevi} Let $G = S_n$, $n \leq 6 $  and $P$ a $p$-subgroup
of $S_n$.  If  $F_P(G)$ 
is a saturated fusion system, then  $  S_P(G,k)$ is Brauer indecomposable. 
\end{prp}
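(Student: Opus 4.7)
The strategy is to reduce the statement to a short list of explicit cases not already handled by Theorem~\ref{Scottcyclic} and then to verify each remaining case by direct calculation using the explicit descriptions of Scott modules in \cite{craven1}. By Theorem~\ref{Scottcyclic}, we may assume throughout that $P$ is non-abelian.

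The first step is to enumerate the non-abelian $p$-subgroups of $S_n$ for $n\leq 6$. For $p$ odd and $n\leq 6$, every Sylow $p$-subgroup of $S_n$ is abelian (trivial or $C_3$ for $p=3$, $n\leq 5$; elementary abelian of rank $2$ for $p=3$, $n=6$; trivial or cyclic for $p\geq 5$), so only $p=2$ remains. Since $Q_8$ does not embed in $S_6$, the non-abelian $2$-subgroups of $S_n$ for $n\leq 6$ are, up to $S_n$-conjugacy, the dihedral group $D_8$ sitting inside $S_4$, $S_5$ and $S_6$, and the Sylow $2$-subgroup $D_8\times C_2$ of $S_6$. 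For each resulting pair $(G,P)$, I would first determine whether $\Ff_P(G)$ is saturated (if not, the conclusion is vacuous) and then proceed to verify Brauer indecomposability.

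The verification in each saturated case proceeds as follows. I would take Craven's explicit description of $M=\Sc_P(G,k)$ from \cite{craven1}, and for each subgroup $Q\leq P$, compute the Brauer quotient $M(Q)$ and identify it as a $kQC_G(Q)/Q$-module. Using the fact that Brauer construction is compatible with normalizers and conjugation, it suffices to run over a set of representatives of $\Ff_P(G)$-conjugacy classes of fully $\Ff_P(G)$-normalized subgroups of $P$, considerably reducing the number of computations. In each case one checks that $M(Q)$ is either zero or indecomposable over $kQC_G(Q)/Q$.

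The main obstacle is the explicit calculation of $M(Q)$ for the small number of non-cyclic, non-abelian subgroups $Q$ arising in the two cases $P=D_8$ and $P=D_8\times C_2$; these are precisely the calculations for which Craven's tables provide the requisite input data. No new conceptual ingredient is needed beyond Theorem~\ref{Scottcyclic} together with the bookkeeping of these finitely many Brauer quotients.
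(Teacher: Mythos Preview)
Your broad strategy---reduce to non-abelian $P$ via Theorem~\ref{Scottcyclic} and then verify the remaining cases using Craven's data---matches the paper's. But there is a genuine gap in your enumeration, and you miss a key reduction.

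The enumeration is incorrect: $S_6$ contains several conjugacy classes of subgroups isomorphic to $D_8$, not one. The paper uses the saturation hypothesis (specifically, the Sylow axiom forces $PC_G(P)$ to be a Sylow $2$-subgroup of $N_G(P)$) to cut the list to four explicit copies of $D_8$, distinguished by whether the generating element of order $4$ has cycle type $(4)$ or $(4,2)$ and whether the reflecting involution moves $\{5,6\}$. The corresponding Scott modules are genuinely different (of dimensions $6$, $10$, $6$, $2$), so each requires its own treatment. You also overlook that if $P$ is a Sylow $p$-subgroup then $\Sc_P(G,k)=k$ and Brauer indecomposability is automatic; this disposes of $D_8\le S_4$, $D_8\le S_5$, and $D_8\times C_2\le S_6$ at once, leaving only the non-Sylow $D_8$'s in $S_6$ as the real work.

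On method, the paper does not compute each $M(Q)$ as a module. It lifts $M$ to $\tilde M$ over $\mathcal{O}$, reads off the ordinary character $\chi$ from Craven's composition series together with the $2$-decomposition matrix of $S_6$, and uses $\dim_k M(\langle x\rangle)=\chi(x)$ for $2$-elements $x$ to bound $\dim_k M(Q)$. For most $Q$ one gets $\dim_k M(Q)\le 2$; since $\dim_k M(P)\ge 2$, a short chain argument gives $M(Q)\cong M(P)$ as $kC_G(P)$-modules, and indecomposability follows from Lemma~\ref{Sylowequiv}. Only the few $Q$ with $\dim_k M(Q)=4$ need a separate Mackey/vertex argument. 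This is considerably more economical than computing all Brauer quotients module-theoretically.
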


Let  $A$   be   a   $p$-permutation $G$-algebra, finite dimensional over 
$k$,   and  $b$    a primitive idempotent  in the   subalgebra of 
$G$-fixed points of  $A$.   To each triple $(A, b, G)$, 
there  is associated  a $G$-poset of  Brauer pairs.  These were introduced 
in \cite{alperinbroue} for the case   $A=kG$, 
considered as a $G$-algebra via the  conjugation action of $G$ on itself; 
the general   case was treated in \cite{brouepuig}. 
Roughly  speaking,  an $(A,b,G)$-Brauer pair is a  pair  of the   form  
$ (P, e)$, where $P$ is a $p$-subgroup  of $G$ and 
$e$  is a block of  the Brauer quotient  $A(P)$ of $A$  
in a prescribed relationship with $b$.  For a maximal  object 
$(P, e)$ of  the poset of 
$(A, b, G)$-Brauer pairs, we let  $\Ff_{(P,e)} (A, b,G)$ denote the category 
whose objects  are the subgroups of $P$ and whose morphisms 
are group homomorphisms induced  by the action of $G$ on the underlying poset
(for exact definitions we refer the reader to section 2).
In case  $A=kG $, the results of \cite{alperinbroue} imply that 
$\Ff_{(P,e)} (A, b,G)$  is a saturated fusion system 
(see \cite{Linckelmann:2006b}).  In the general case, it is a consequence 
of \cite{brouepuig}  that  $\Ff_{(P,e)} (A, b,G)$ is a fusion system  in 
the sense of  \cite[Definition 1.1]{brotolevioliver}
(see Proposition \ref{permutationfusion}). However, it is not the case that  
$\Ff_{(P,e)} (A, b,G)$ is  in general  saturated 
(see remarks after the proof of  Theorem \ref{saturationresultmod}  
in section \ref{proofofmodsat}).
Theorem \ref{saturationresultmod}    is a special case   
of   the following  result, due to the first author,  
which gives a sufficiency criterion for saturation.   For an $(A,b,G)$-brauer pair, $(P,e)$, let
$C_G(P, e)$ denote  the    subgroup of  $C_G(P)$  which stabilizes  the  block $e$ of  $A(Q)$ under the 
natural action of $C_G(Q)$  on $A(Q)$.

\begin{thm}\label{saturationresult}   Let  $G$  be a   finite group, $A$  a $p$-permutation 
$G$-algebra,  and  $b$  a primitive idempotent of $A^G$. 
Suppose that 

(i)  $b$ is a central idempotent of  $A$; and 

(ii) For each   $(A, b,G)$-Brauer pair $(Q,f)$  the   idempotent $e$   
is primitive  in  $A(Q)^{C_G(Q, e)} $.
Then for  any maximal $(A, b, G)$-Brauer pair  $(P,e)$, 
$\Ff_{(P, e)}(A, b,G)$ is a  saturated fusion system on $P$.
\end{thm}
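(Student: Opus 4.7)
The plan is to adapt the Alperin--Brou\'e proof that the fusion system of a $p$-block is saturated to the general $p$-permutation $G$-algebra setting, with hypotheses (i) and (ii) substituting for the structural properties of block idempotents (centrality and primitivity in the algebra of $G$-fixed points on each Brauer quotient) which are used implicitly in the classical argument.

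First I would fix a maximal $(A,b,G)$-Brauer pair $(P,e)$ and invoke \cite{brouepuig} to associate to each $Q \leq P$ the unique block $e_Q$ of $A(Q)$ such that $(Q,e_Q) \leq (P,e)$. By Proposition~\ref{permutationfusion}, morphisms in $\Ff_{(P,e)}(A,b,G)$ from $Q$ to $R$ are the conjugation maps $c_g$ with $g(Q,e_Q)g^{-1} \leq (R, e_R)$, and in particular $\Aut_{\Ff}(Q) = N_G(Q,e_Q)/C_G(Q)$, while $\Aut_P(Q) = N_P(Q)/C_P(Q)$ embeds naturally into $\Aut_{\Ff}(Q)$.

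To verify the Sylow axiom, I would show that for $Q$ fully $\Ff$-normalized, $\Aut_P(Q)$ is a Sylow $p$-subgroup of $\Aut_{\Ff}(Q)$ and $Q$ is fully $\Ff$-centralized. The crux is a Sylow-type theorem for $(A,b,G)$-Brauer pairs lying above $(Q,e_Q)$. Hypothesis (i) ensures that $b$ acts trivially, so the Brauer pair formalism for $bAb$ is available and $G$ acts transitively on maximal Brauer pairs. Hypothesis (ii), applied to $(Q,e_Q)$ itself, says that $e_Q$ is primitive in $A(Q)^{C_G(Q,e_Q)}$; a standard lifting-of-idempotents argument inside $e_Q A(Q) e_Q$ viewed as a $C_G(Q,e_Q)$-algebra then shows that $C_G(Q,e_Q)$ acts transitively on the maximal Brauer pairs of that local algebra. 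This is exactly the propagation of the Brou\'e--Puig Sylow theorem to intermediate levels that is needed, and a comparison of $P$- and $G$-normalizers of the chain of Brauer pairs from $(Q,e_Q)$ up to $(P,e)$ yields the Sylow axiom in the usual way.

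For the extension axiom, given $\varphi = c_g : Q \to P$ in $\Ff$ with $\varphi(Q)$ fully $\Ff$-centralized, the task of extending $\varphi$ to $N_\varphi$ reduces, after replacing $g$ by $cg$ for a suitable $c \in C_G(\varphi(Q))$ fixing $e_{\varphi(Q)}$, to verifying that $C_P(\varphi(Q))$ is a Sylow $p$-subgroup of $C_G(\varphi(Q), e_{\varphi(Q)})$. This is again a consequence of hypothesis (ii) at the level of $\varphi(Q)$ together with full $\Ff$-centralization. The main obstacle in the whole argument is precisely this step: carrying out the idempotent-lifting and Brauer-pair Sylow theorem without the group-algebra-specific features $A(Q) = kC_G(Q)$ and $A(Q)^{C_G(Q)} = Z(kC_G(Q))$ available in the block case. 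One has to check that primitivity of $e_Q$ in $A(Q)^{C_G(Q,e_Q)}$ is a strong enough substitute for blockness of $e_Q$ to make the idempotent decomposition arguments go through and deliver the required transitive $C_G(Q,e_Q)$-action on maximal Brauer pairs above $(Q,e_Q)$. Once this technical core is in place, both saturation axioms follow by the formal arguments of \cite{alperinbroue} as reformulated in \cite{Linckelmann:2006b}.
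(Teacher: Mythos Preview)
Your strategy is in the right direction, but the extension-axiom step rests on a false reduction. You claim that extending $\varphi$ to $N_\varphi$ comes down to showing that $C_P(\varphi(Q))$ is a Sylow $p$-subgroup of $C_G(\varphi(Q), e_{\varphi(Q)})$, and that this follows from hypothesis~(ii) together with full $\Ff$-centralization. But this is false already for $A = kG$: there $e_Q$ is central in $A(Q) = kC_G(Q)$, so $C_G(Q,e_Q) = C_G(Q)$, and full centralization only makes $C_P(Q)$ a \emph{defect group} of the block $e_Q$, which is in general far from being a Sylow $p$-subgroup of $C_G(Q)$. In the module case $A = \End_k(M)$ one has $e_Q = 1_{A(Q)}$, so again $C_G(Q,e_Q) = C_G(Q)$, and $C_P(Q)$ is certainly not Sylow in $C_G(Q)$ when $P$ is merely a vertex of $M$. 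Hypothesis~(ii) buys you primitivity of $e_Q$ in $A(Q)^{C_G(Q,e_Q)}$, hence a well-defined local Brauer-pair formalism; it does not produce any Sylow statement about $C_P(Q)$ inside $C_G(Q,e_Q)$. Even if it did, $N_\varphi$ lives in $N_P(R)$, not in $RC_P(R)$, so a centralizer-level Sylow statement would not suffice for the extension.

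The paper avoids this by working with normalizers throughout and using the Roberts--Shpectorov form of the saturation axioms. The technical core absent from your outline is an explicit $H$-poset isomorphism, for $QC_G(Q,e_Q)\le H\le N_G(Q,e_Q)$, between $(A(Q),e_Q,H)$-Brauer pairs with first component containing $Q$ and $(A,b,G)$-Brauer pairs above $(Q,e_Q)$ with first component in $H$; this is what transports Brou\'e--Puig transitivity back to the global poset. One then proves: if $(N_P(Q),e_{N_P(Q)})$ is maximal among Brauer pairs above $(Q,e_Q)$ with first component in $N_G(Q,e_Q)$, then the trace characterization of maximal Brauer pairs gives $e_Q \in \Tr{N_P(Q)}{N_G(Q,e_Q)}\bigl((e_QA(Q)e_Q)^{N_P(Q)}\bigr)$, and a short local-algebra lemma using only primitivity of $e_Q$ in $(e_QA(Q)e_Q)^{C_G(Q,e_Q)}$ forces $[N_G(Q,e_Q):N_P(Q)C_G(Q,e_Q)]$ to be prime to $p$, i.e.\ $Q$ is fully automized. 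Receptivity follows from transitivity applied with $H = N_P(Q)C_G(Q,e_Q)$, because ${}^gN_\varphi$ always lies in $N_P(Q)C_G(Q,e_Q)$. A final step shows every $\Ff$-class contains such a $Q$.
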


We will say that a triple $(A, b, G)$ 
satisfying   conditions (i) and (ii)  of 
Theorem \ref{saturationresult}   is     {\it  a saturated triple} or that  $(A,b,G)$ is  os saturated type.
In this case,  if $G$ and $b$ are clear from the context, we
may also  simply  say that $A$ is of saturated 
type.
If   $A=kG$, then   the primitive idempotents   of $A^{G}$ are  prcisely the  blocks of $kG$,  and it is easy 
to see that $(A, b, G)$ is   a saturated triple (see Remark at the end of  Section 
\ref{saturationsection}), hence
Theorem \ref {saturationresult}  may be viewed as a generalization of 
the  fact that  block fusion systems are saturated.  But the class of 
$p$-permutation $G$-algebras   is  very large.
One  motivation, besides the relevance    to Brauer indecomposability,
for introducing the   notion of  saturated type triples   is   that they 
provide    a  new  source of saturated fusion systems and  
hence may  contribute to our understanding  of  these categories.

The paper  is divided into  four sections. 
In section 2, we  recall the results and definitions of  
\cite{alperinbroue} and \cite{brouepuig}.   Section 3 contains the proof of 
Theorem \ref{saturationresult}. Section 4   deals with  
$p$-permutation modules, and contains the proofs of  
Theorem \ref{saturationresultmod},  Theorem  \ref{Scottcyclic},
Corollary \ref{Scottcycliccor},  Corolllary \ref{bimodulescott} and Proposition \ref{symevi}.

\section {Background and Quoted results} \label{ppermutationsection} 
 
In this section, we set up  notation and recall 
definitions and  background results  on Brauer pairs from the papers 
\cite{alperinbroue} and \cite{brouepuig}.  For notation and terminology regarding fusion systems and saturated fusion  systems, we refer   the reader to 
\cite{Linckelmann:2007a},\cite{brotolevioliver}.

Let $G$ be a finite group, and  let $A$ be a   
$p$-permutation $G$-algebra, finite dimensional over $k$.  Recall that 
$A$ is $p$-permutation if  for any $p$-subgroup $Q$ of $G$ there is a 
$k$-basis  of  $A$ stabilized by $Q$.

\subsection{ } Let  $P$ be a subgroup  of $G$. We denote by 
$A^P$ the subalgebra consisting of the fixed points of $A$ under $P$; 
if $Q$ is a subgroup of  $P$, the map $\Tr{Q}{P}:A^Q \to A^P$ is the 
$k$-linear map defined by the formula 
$\Tr{Q}{P}(a) =  \sum_{x\in P/Q}\,^xa$.  The image of $\Tr{Q}{P} $, denoted  by 
$A_{Q}^P $ is  a two-sided ideal of $A^P$ and  we  denote  by   $ A_{<P}^{P} $ 
the   sum  $  \sum_Q A_{Q}^P $, where $Q$ ranges over the proper subgroups of 
$P$. We denote by $ A(P) $  the  quotient  $A^P/ A_{<P}^{P} $, and 
we  denote by   $\Br_P^A  $ the canonical morphism from $A^P$ onto $A(P)$. 
Recall  from   \cite[Proposition 1.5]{brouepuig} that 
$ A(P)$ is a  $p$-permutation 
$N_G(P)$ algebra.    For $ g\in G$, the map  
which sends an element $\Br_P^A(a)$,  
where $a\in A^P$ to  the element $\,^g(\Br_P^A(a)):=\Br_{\,^gP}^A(\, ^ga) $  
is an algebra isomorphism   from  $A(P)$  to $A(\,^gP)$.

If $ Q \leq   P$ are $p$-groups, then  there exists an algebra 
morphism, $\Br_{P,Q}^A: \Br_Q^A(A^P) \to  A(P)$ such that 
$\Br_{P,Q}^A(\Br_Q^A(a))= \Br_P^A(a) $ for  $ a\in A^P$.
Clearly,  $\, ^g\Br_{P,Q}^A (x) = \Br_{\,^gP,\,^gQ}^A(\, ^gx) $  for any 
$ g\in G$, $x \in \Br_Q^A(A^P) $.  

If, in addition, $Q$ is normal  in $P$, then 
$\Br_Q^A(A^P)= A(Q)^P $ and $\Ker (\Br_{P,Q}^{A})=\Ker(\Br_P^{A(Q)})$. Thus,  
$\Br_{P,Q}^A $ induces an isomorphism  $\siB_{P,Q}^A:  A(Q)(P)\to A(P) $.  
Note that  $\siB_{P,Q}^A$  satisfies and  
is completely determined by the  condition
\begin{equation*} \siB_{P,Q}^A( \Br_P^{A(Q)}(\Br_Q^A(x)))= \Br_{P,Q}^A 
(\Br_Q^A(x)))  )= 
\Br_P^A(x)  \text { \   for all  \ }   x\in A^P. \end{equation*}
Further, $\, ^g\siB_{P,Q}^A(w)= \siB_{\,^gP,Q}^A(\,^gw)  $ for all 
$g \in N_G(Q) $  and $ w \in A(Q)(P)$.


\subsection { } Let   $b$ be a primitive idempotent of $A^G$.
Recall from \cite[Definition 1.6]{brouepuig} that an $(b,G)$-Brauer pair 
is a pair $(P, e)$ where $P$ is a 
$p$-subgroup of $G$  such that  $\Br_P(b) \ne 0 $ and 
$e$ is a   block  of $A(P) $ such that  
$\Br_P(b)e \ne 0 $.   Here we recall that a  block of a finite 
dimensional algebra is a primitive idempotent of the center of the algebra. 
As we will consider Brauer 
pairs for different algebras simultaneously, we will adopt the more 
cumbersome notation 
$(A, b, G)$-Brauer pair for $(b, G)$-Brauer pair. 

Recall from \cite[Definition 1.6]{brouepuig}  the  notion of inclusion of  
$(A,b,G)$-Brauer pairs:  If  $(Q, f) $ and $(P,e) $ are 
$(A, b, G)$-Brauer pairs, then $(Q, f)\leq (P,e) $  
if $ Q \leq P $ and whenever $i$ is a primitive idempotent of $A^P$ such that 
$\Br_P^A(i)e\ne 0 $, then  $\Br_Q^A(i) f \ne 0 $.  

Let $ (P,e) $   be  an  $(A, b, G) $-Brauer pair and let $ x \in G$. 
The conjugate of $(P,e)$ by $x$ is the $(A,b, G)$-Brauer pair  
$\, ^x(P,e ) := (\, ^xP, \, ^xe )$.   Clearly, conjugation by $x$  preserves inclusion. 

Recall the following fundamental property of inclusion of Brauer pairs
\cite[Theorem 3.4]{alperinbroue},\cite[Theorem 1.8] {brouepuig}. 

\begin{thm} \label{IncTheorem} Let $(P,e)$ be an $(A, b, G)$-Brauer pair, and 
let $Q \leq P$. 

(i) There exists a unique   block $f$    of  $ A(Q)$ such that  $(Q, f) $ is an  $(A,b,G)$-Brauer pair and 
$(Q,f) \leq (P,e) $. 

(ii)  If     $(Q, f)$ is an  $(A,b,G)$-Brauer pair  and $P$ normalizes $Q$,   then   $(Q, f) \leq (P, e) $ if and only if 
$P$ fixes  $f$ and 
$\Br_{P,Q}^A(f)e=  e $.   

(iii) The set of $(A,b,G)$-Brauer pairs is   a  $G$-poset under the action of $G$ defined above.
\end{thm}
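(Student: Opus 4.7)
The plan is to prove the three parts together, establishing part (ii) first in the special case $Q \nor P$, then deducing (i) by a subnormal chain argument, and finally verifying (iii) directly from the definition of inclusion.

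For the normal case of (ii), the key input is the factorization $\Br_{P,Q}^A = \siB_{P,Q}^A \circ \Br_P^{A(Q)}$ recorded in the preliminaries, together with the identification $\Br_Q^A(A^P) = A(Q)^P$. I would decompose $\Br_Q^A(b) \in A(Q)^P$ as a sum of primitive central idempotents of $A(Q)^P$: each summand is either a $P$-fixed block $f$ of $A(Q)$ appearing below $b$, or a $P$-orbit sum $\Tr{\Stab_P(f')}{P}(f')$ for a non-fixed block $f'$. Since the orbit-sum summands lie in $A(Q)^P_{<P}$, they are annihilated by $\Br_P^{A(Q)}$ and hence by $\Br_{P,Q}^A$, yielding
\begin{equation*}
\Br_P^A(b) \;=\; \Br_{P,Q}^A(\Br_Q^A(b)) \;=\; \sum_{f} \Br_{P,Q}^A(f),
\end{equation*}
the sum running over $P$-fixed blocks $f$ of $A(Q)$ with $f\cdot\Br_Q^A(b)\ne 0$. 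Multiplying by the block $e$ of $A(P)$ isolates a unique $P$-fixed block $f_0$ with $\Br_{P,Q}^A(f_0) e = e$; uniqueness is immediate from orthogonality of distinct blocks. To match this block-theoretic characterization with the primitive idempotent definition of $(Q,f_0)\leq(P,e)$, I would refine a primitive decomposition of $1\in A^P$ further in $A^Q$, apply $\Br_Q^A$, and use $\Br_{P,Q}^A(f_0)e = e$ to transfer the implication between $\Br_P^A(i)e \ne 0$ and $\Br_Q^A(i)f_0\ne 0$.

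For (i), since every subgroup of a $p$-group is subnormal, fix a chain $Q = Q_0 \nor Q_1 \nor \cdots \nor Q_n = P$ and iterate the normal case of (ii) descendingly to manufacture a tower $(Q_0,f_0)\leq(Q_1,f_1)\leq\cdots\leq(Q_n,f_n)=(P,e)$; the pair $(Q,f_0)$ gives existence, and its independence of the chosen chain follows from uniqueness at each normal step combined with the transitivity proved in (iii). For (iii), reflexivity and $G$-equivariance are immediate from the definitions, so the only real content is transitivity: given $(Q,f)\leq(R,g)\leq(P,e)$ and a primitive $i\in A^P$ with $\Br_P^A(i)e\ne 0$, decompose $i$ in $A^R$ as a sum of primitives to obtain a summand $j$ with $\Br_R^A(j)g\ne 0$ by the first inclusion; refining $j$ in $A^Q$ and applying the second inclusion yields a primitive $j'\in A^Q$ below $i$ with $\Br_Q^A(j')f\ne 0$.

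The principal obstacle is the equivalence asserted in (ii) between the clean characterization ``$f$ is $P$-fixed and $\Br_{P,Q}^A(f)e = e$'' and the primitive idempotent definition of inclusion. One direction is handled by the decomposition argument above; the reverse direction, deducing $\Br_{P,Q}^A(f)e = e$ from the primitive idempotent implication, demands careful use of idempotent lifting in the $p$-permutation algebra $A^P$ and of the fact that the Brauer map preserves primitivity on primitive idempotents with full vertex. Once (ii) is settled in the normal case, the rest of the argument is essentially formal bookkeeping through the subnormal chain.
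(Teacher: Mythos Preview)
The paper does not prove this theorem at all: it is stated in Section~2 as a quoted background result, with the attribution ``Recall the following fundamental property of inclusion of Brauer pairs \cite[Theorem 3.4]{alperinbroue}, \cite[Theorem 1.8]{brouepuig}'' and no accompanying argument. So there is no proof in the paper to compare your proposal against.

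That said, your outline is essentially the standard route taken in the cited references: establish the normal case via the $P$-orbit decomposition of $\Br_Q^A(b)$ into blocks of $A(Q)$, observe that non-fixed orbit sums die under $\Br_{P,Q}^A$, and then propagate through a subnormal chain. One point to watch is the uniqueness in (i) for general $Q\leq P$: your phrase ``independence of the chosen chain follows from uniqueness at each normal step combined with transitivity'' hides a step. What you actually need is that \emph{any} pair $(Q,f)$ satisfying the primitive-idempotent definition of $(Q,f)\leq(P,e)$ is forced to sit below the intermediate pair $(Q_1,f_1)$ in your chain, and this does not follow merely from transitivity of $\leq$ --- it requires the converse direction of (ii) at each normal link (i.e., that the primitive-idempotent definition of inclusion, when $Q\nor P$, forces $f$ to be $P$-fixed with $\Br_{P,Q}^A(f)e=e$). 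You have correctly flagged this converse as the principal obstacle; once it is in hand, uniqueness in (i) follows by descending induction along the chain rather than by comparing two different chains.
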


Recall also  \cite[Theorem 3.10]
{alperinbroue} and \cite[Theorem 1.14]{brouepuig}).

\begin{thm}\label{relatingBrauerwithtrace}  
Let  $A$ be a  $p$-permutation  $G$-algebra and let 
$b$ be a primitive idempotent  of $A^G$.   Then,   

(i) The   group $G$  acts transitively  on the set of maximal 
$(A,b,G)$-Brauer pairs.

(ii)  Let $(P, e)$  be an $(A, b, G)$-Brauer pair.  The  following are 
equivalent.

(a)   $(P, e)$   is  a maximal $(A,b, G)$-Brauer pair.

(b) $\Br_P^A(b) \ne 0 $ and  $P$ is maximal amongst subgroups $p$-subgroups
 $Q$ of $G$ 
 with the property that  $\Br_Q^A(b) \ne 0 $.

(c)  $ b \in \Tr{P}{G} (A^P)$   and  $P$ is minimal
amongst subgroups $H$ of $G$ such that $ b \in  \Tr{H}{G} (A^H)$.
\end{thm}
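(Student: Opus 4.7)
The plan is to prove part (ii) first and then derive part (i). For the equivalence of (b) and (c), I would develop the classical defect-group theory for the primitive idempotent $b \in A^G$. The key tools are Rosenberg's lemma (a primitive idempotent lying in a finite sum of two-sided ideals must lie in one of them), the Mackey formula for products of relative traces,
\[
\Tr{H}{G}(x)\,\Tr{H'}{G}(y) \;=\; \sum_{g \in [H\backslash G/H']} \Tr{H\cap {}^gH'}{G}(x\cdot {}^gy),
\]
and the Brauer--Mackey formula $\Br_P(\Tr{H}{G}(a)) = \sum_{x \in [P\backslash G/H],\,P \leq {}^xH} \Br_P({}^xa)$, which follows by Mackey expansion of $\Tr{H}{G}$ inside $A^P$ together with the vanishing of $\Br_P$ on $A_{<P}^P$. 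Applying Rosenberg to $b = b^2$ expanded via the Mackey product formula shows that any two subgroups minimal for $b \in A_H^G$ are $G$-conjugate; a Sylow argument (invertibility in $k$ of $[H:S]$ for $S$ a Sylow $p$-subgroup of $H$) shows any such minimal $H$ is itself a $p$-subgroup; and the Brauer--Mackey formula identifies these with the $p$-subgroups $P$ maximal for $\Br_P(b) \neq 0$. This yields (b) $\Leftrightarrow$ (c) together, as a by-product, with the $G$-conjugacy of all $p$-subgroups maximal for $\Br_P(b) \neq 0$.

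For (a) $\Leftrightarrow$ (b), I would invoke Theorem \ref{IncTheorem}. The direction (b) $\Rightarrow$ (a) is immediate: any strict extension $(P,e)<(Q,f)$ must have $Q \supsetneq P$ (two Brauer pairs over the same group are equal, since for a primitive idempotent $i \in A^P$ with $\Br_P(i)$ primitive in $A(P)$, the inclusion condition forces the two blocks of $A(P)$ to coincide) and $\Br_Q(b)\neq 0$, contradicting maximality of $P$. For (a) $\Rightarrow$ (b), suppose $(P,e)$ is a maximal Brauer pair but some $p$-subgroup $Q \supsetneq P$ has $\Br_Q(b)\neq 0$. One reduces to the case $P \nor Q$ using that in finite $p$-groups proper subgroups are properly contained in their normalizers. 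Theorem \ref{IncTheorem}(ii), read in reverse, reduces the existence of an extension $(P,e)\leq (Q,f)$ to the non-vanishing of $\Br_{Q,P}^A(e)$ in $A(Q)$. This non-vanishing is established by lifting $e$ through the canonical surjection $A^P \twoheadrightarrow A(P)$ (using the $p$-permutation structure and lifting of idempotents modulo the nilpotent kernel) to an idempotent $i \in A^P$ with $\Br_P(i)=e$, and then combining $\Br_Q(b) \neq 0$ with Brauer--Mackey to produce a block $f$ of $A(Q)$ satisfying $\Br_{Q,P}^A(e) f = f$, contradicting maximality.

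For (i), let $(P,e)$ and $(P',e')$ be two maximal Brauer pairs. By (ii), $P$ and $P'$ are both maximal $p$-subgroups with $\Br(b)\neq 0$, and the defect-group argument above shows they are $G$-conjugate; replacing $(P',e')$ by a suitable $G$-conjugate, assume $P=P'$. The blocks $e$ and $e'$ then both appear in the central idempotent decomposition of $\Br_P(b)$ in $A(P)$, on which $N_G(P)/P$ acts. Using the surjectivity of $\Br_P\colon A^{N_G(P)} \to A(P)^{N_G(P)/P}$ for $p$-permutation $G$-algebras together with idempotent lifting through the nilpotent kernel, I lift $N_G(P)/P$-orbit sums of blocks appearing in $\Br_P(b)$ to mutually orthogonal idempotents of $A^{N_G(P)}$, then form their $\Tr{N_G(P)}{G}$-traces to obtain a decomposition of $b$ in $A^G$; the primitivity of $b$, via Rosenberg, forces all orbit-sums but one to vanish, which gives the required $N_G(P)$-conjugacy of $e$ and $e'$. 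The principal obstacle is the (a) $\Rightarrow$ (b) direction of (ii): Theorem \ref{IncTheorem} provides only downward restriction of Brauer pairs in a canonical way, so upward extension must be produced by hand, and verifying $\Br_{Q,P}^A(e)\neq 0$ draws essentially on the $p$-permutation structure of $A$ rather than on formal properties of the poset.
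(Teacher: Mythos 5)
Your plan takes a genuinely different route from the paper's, and it is worth seeing why. The paper treats this as a recalled result: part (i) and the equivalence (a)$\Leftrightarrow$(c) are cited directly from Alperin--Brou\'e and Brou\'e--Puig, and the only statement the authors actually prove is the equivalence (a)$\Leftrightarrow$(b), which they obtain as a three-line consequence of (i): if $(P,e)$ is maximal and $P\leq R$ is a $p$-subgroup with $\Br_R^A(b)\neq 0$, choose any Brauer pair $(R,t)$, enlarge it to a maximal pair $(S,u)$, and apply (i) to get that $(S,u)$ is $G$-conjugate to $(P,e)$; then $|P|=|S|\geq|R|\geq|P|$ forces $R=P$. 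You instead propose to rebuild the entire Alperin--Brou\'e/Brou\'e--Puig theory from scratch -- prove (b)$\Leftrightarrow$(c) by classical defect-group arguments (Rosenberg and Mackey), construct upward extensions of Brauer pairs by hand for (a)$\Rightarrow$(b), and then derive (i) at the end. That is a legitimate and more self-contained route (it is essentially how the original papers proceed), but it is far heavier machinery than is used here, and the extra work is precisely the part that the paper chooses to quote.

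Your sketch also has two loose spots. In (a)$\Rightarrow$(b) you say Theorem \ref{IncTheorem}(ii), read in reverse, reduces the extension problem to showing $\Br_{Q,P}^A(e)\neq 0$; but $\Br_{Q,P}^A$ is only defined on $\Br_P^A(A^Q)=A(P)^Q$, so $\Br_{Q,P}^A(e)$ is not even meaningful until you have first arranged that $Q$ fixes the block $e$. Theorem \ref{IncTheorem}(ii) states this stabilization hypothesis explicitly. You can try to pass to a $Q$-orbit of $e$, but then you have changed the Brauer pair $(P,e)$ you started from, so the reduction is not as clean as the sketch claims; this is exactly the subtlety that the paper's deduction of (a)$\Leftrightarrow$(b) from (i) avoids. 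Separately, your argument for (i) -- lift $N_G(P)$-orbit sums of blocks through $\Br_P$, apply $\Tr{N_G(P)}{G}$, and invoke Rosenberg -- does not as stated produce a decomposition of $b$: there is nothing yet tying the traces of these lifted idempotents to $b$ itself. To make this work one needs the relative-trace characterization of (c) first (a Green-correspondence-type argument in the $G$-algebra), and the bookkeeping is considerably more delicate than the sentence ``form their traces to obtain a decomposition of $b$'' suggests. If you want a self-contained proof, you should follow Brou\'e--Puig closely rather than re-derive this on the fly; the paper's economical choice is to cite that literature and only prove the (a)$\Leftrightarrow$(b) addendum, which needs nothing beyond (i).
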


The  equivalence   of ii(b) above with  ii(a)   
is not  explictly stated in \cite[Theorem 1.14]{brouepuig}, 
but     is  an  immediate consequence of (i). 
For clearly, if $P $ satisfies   ii(b), then $(P,e) $-is a maximal 
$(A, b, G)$-Brauer pair. Conversely, if $(P,e)$ is a maximal 
$(A, b, G)$-Brauer pair and  $ P \leq R $ is such that $\Br_R^A(b) \ne 0 $, 
then there exists some  block $t$ of $A(Q)$ such that $(R, t)$ is an 
$(A, b, G)$-Brauer pair. Let $(S,u) $ be a maximal $(A, b, G)$-Brauer pair with
$(R,t) \leq (S, u) $. Then  by (i), $(P,e) $ and $(S,u) $ 
are  $G$-conjugate. In particular, $ |P|= |S| \geq |R| \geq |P| $, hence $P= R$.

\bigskip

If $Q, R $  are subgroups of  $G$ and  $g\in G $ is such that  
$\,^gQ \leq R  $,  then   $\conj_g: Q\to R  $ denotes  
the map  which sends an element $x $ of $Q$ to the element   
$\,^gx:=gxg^{-1} $ of $R$.

\begin {dfn}   \label{ppermutationfusiondefn}   
Let $(P, e_P)$ be a maximal $(A, b, G)$-Brauer pair.   For each subgroup 
$Q$ of $P$, let $ (Q,e_Q)$ be  the unique $(A, b, G)$-Brauer pair 
such that $(Q, e_Q) \leq  (P, e_P)$.  The category  
$\Ff_{(P,e_P)} (A, b,G)$  is  the category   whose  objects are the  subgroups 
of $P$, whose morphisms are   given  by
$$\Hom_{\Ff_{(P,e_P)}(A,b,G)}(Q,R):=\{\conj_g:Q\to R |g\in G, \,^g(Q,e_Q)
\le(R,e_R)\}  $$  for $Q, R \leq   P$,
and where composition of morphisms  is the usual composition  of functions.
\end{dfn}

For any $ Q \leq R$, the inclusion  map from
$ Q$   to $R $  is a morphism  in 
$\Ff_{(P,e_P)} (A, b,G)$. In particular, the identity map  $ Q \to Q$  
is a morphism in 
$\Ff_{(P,e_P)} (A, b,G)$    and if $ R , S\leq P$ and $g, h \in G$ are such 
that  $\,^g(Q,e_Q)
\le(R,e_R) $ and $\,^h(R,e_R)
\le(S,e_S)$, then 
$$ \,^{hg}(Q,e_Q)  \leq  \,^h(R,e_R) \le   (S,e_S), $$ 
so $\Ff_{(P,e_P)} (A, b,G)$ is a category.    By the   uniqueness of 
inclusion of Brauer pairs
for $ Q, R \leq P $  and 
$g \in G$,    $\,^g(Q,e_Q)
\le(R,e_R) $ if and only if $\, ^g Q \le R $ and  $  \,^ge_Q = e_{\,^g Q}$  
  and this in turn  holds if and only if $\, ^g Q \le R $ and  $\,^g(Q,e_Q)
\le(P,e_P) $.    
Thus  if  $x \in P$,  then since $e_P$ is fixed  by $P$,   
$\,^xe_P = e_P$. Hence,  for  $Q \leq P$,
$$  \, ^x(Q, e_Q)  \leq \, ^x(P, e_P) =  (P,e_P). $$  So, 
 whenever $\,^x Q \leq R $,  then
$ \conj_x : Q\to R  $ is a morphism in   $\Ff_{(P,e_P)} (A, b,G)$.

Also, note  that if $Q, R \leq P$ and $ g \in  G $ are such that 
$\,^g(Q,e_Q)\le (R,e_R) $, then  $ \conj_g : Q \to R$  factors as 
$\conj_g :  Q \to \, ^gQ $  
followed by the inclusion of $\, ^gQ $ into $R$.    Summarizing the  above 
discussion gives the following proposition,  the  last  statement  of which    is 
immediate  from  the fact that  any two  maximal 
$(A,b, G)$-Brauer pairs  are  $G$-conjugate.

\begin{prp} \label{permutationfusion} Let $A$  be a $p$-permutation 
$G$-algebra,  $b$  a primitive idempotent of 
$A^G$  and $(P, e_P)$    a maximal 
$(A,b,G)$-Brauer pair. 
Then  $\Ff:=\Ff_{(P,e_P)} (A, b,G)$ satisfies the following.

(i) $\Hom_P(Q,R) \subseteq  \Hom_{\Ff}(Q,R) \subseteq \Inj (Q,R) $ for all  
$Q,R \le P$.

(ii)  For any  $\phi \in \Hom_{\Ff}(Q,R) $, the induced isomorphism  
$Q \cong \phi(Q)$  and its inverse are  morphisms in $\Ff $ and its inverse 
are morphisms in $\Ff$. In particular,  every morphism in $\Ff$ factors 
as an isomorphism in $\Ff$  followed by an inclusion  in $\Ff$.

Thus,  $\Ff $ is a fusion system in the sense of 
\cite[Definiton 1.1]{brotolevioliver}.  If $ (P',e_{P'})$ 
is another  maximal $(A, b,G)$-Brauer pair, then 
$\Ff_{(P', e_{P'})} (A,b,G)$ is isomorphic to $\Ff_{(P, e_P)}(A, b, G)  $.
\end{prp}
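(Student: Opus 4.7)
The plan is to verify parts (i), (ii), and the final isomorphism statement in turn, leaning heavily on the observations that have already been assembled in the paragraphs immediately preceding the proposition. The key technical fact I will use repeatedly is the equivalence (valid for $Q, R \le P$ and $g \in G$) that $\,^g(Q, e_Q) \le (R, e_R)$ if and only if $\,^g Q \le R$ and $\,^g e_Q = e_{\,^g Q}$, equivalently $\,^g Q \le R$ and $\,^g(Q, e_Q) \le (P, e_P)$. This was already derived from the uniqueness clause of Theorem \ref{IncTheorem}(i) together with transitivity of the inclusion of Brauer pairs.

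For part (i), the containment $\Hom_P(Q, R) \subseteq \Hom_{\Ff}(Q, R)$ is exactly what was established right before the statement: if $x \in P$ satisfies $\,^x Q \le R$, then since $e_P$ is fixed by $P$ we have $\,^x(P, e_P) = (P, e_P)$, so $\,^x(Q, e_Q) \le (P, e_P)$, and the above equivalence forces $\,^x(Q, e_Q) \le (R, e_R)$. The containment $\Hom_{\Ff}(Q, R) \subseteq \Inj(Q, R)$ is immediate because each $\conj_g$ is the restriction of an automorphism of $G$ and so is injective on $Q$.

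For part (ii), I take $\phi = \conj_g \in \Hom_{\Ff}(Q, R)$ with $\,^g(Q, e_Q) \le (R, e_R)$. By the equivalence above, $\,^g e_Q = e_{\,^g Q}$ and $\,^g(Q, e_Q) \le (P, e_P)$, so $\conj_g : Q \to \,^g Q$ is a morphism in $\Ff$; composing with the inclusion $\,^g Q \hookrightarrow R$ (which lies in $\Ff$ by part (i)) recovers $\phi$, giving the required factorization. For the inverse, applying conjugation by $g^{-1}$ to $\,^g e_Q = e_{\,^g Q}$ yields $e_Q = \,^{g^{-1}} e_{\,^g Q}$, so $\,^{g^{-1}}(\,^g Q, e_{\,^g Q}) = (Q, e_Q) \le (Q, e_Q)$, showing that $\conj_{g^{-1}} : \,^g Q \to Q$ lies in $\Ff$.

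For the final statement, Theorem \ref{relatingBrauerwithtrace}(i) supplies an element $g \in G$ with $\,^g(P, e_P) = (P', e_{P'})$. Since the $G$-action on Brauer pairs preserves inclusion, conjugation by $g$ induces a functor $\Ff_{(P, e_P)}(A, b, G) \to \Ff_{(P', e_{P'})}(A, b, G)$ sending $Q \le P$ to $\,^g Q \le P'$ and $\conj_h : Q \to R$ to $\conj_{ghg^{-1}} : \,^g Q \to \,^g R$, and conjugation by $g^{-1}$ provides an inverse functor. The only potential pitfall in the whole argument is confusing the two inclusion relations $(\cdot) \le (R, e_R)$ and $(\cdot) \le (P, e_P)$; once one consistently translates between them using the equivalence recalled in the first paragraph, the verifications are essentially formal.
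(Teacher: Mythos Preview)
Your proposal is correct and follows essentially the same approach as the paper: the paper's argument is in fact given in the paragraphs preceding the proposition (which is explicitly stated to be a summary of that discussion), and your proof reproduces those observations, adding only slightly more detail for the inverse isomorphism in part (ii) and for the explicit construction of the functor in the final isomorphism statement.
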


\section{Proof of Theorem \ref{saturationresult}} 
\label{saturationsection}
Throughout this section,   $G$ will denote a finite group, $A$   a $p$-permutation $G$-algebra, and 
$b$  a primitive idempotent of $A^G$. 
Recall from the introduction that 
$(A, b, G)$  is a saturated triple  if conditions  (i) and (ii) of Theorem  \ref{saturationresult}  hold.  Thus, 
we will prove that if $(A, b, G)$ is   a saturated triple, then 
$\Ff_{(P,e_P)}(A, b, G)$ is  saturated for any maximal $(A, b, G)$-Brauer pair 
$(P, e_P)$. We need some  preliminary results.

\begin{lmm}\label{fullynormalisedSylowgen} Let  $H$ be a finite group and let $B$ be an $H$-algebra. 
 Let $R$ be  a subgroup  of $H$ and let $C$  be a normal subgroup of $H$.  Suppose  that
 $1_B \in \Tr{R}{H}(B^R)$ and  $1_B$ is  primitive in $B^C$. Then, $RC/C$  contains a Sylow $p$-subgroup of $H/C$. 
 \end{lmm}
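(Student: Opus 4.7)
The plan is to reduce to the case $C=1$ by passing to the algebra $D:=B^C$, regarded as an $\bar H$-algebra where $\bar H:=H/C$; this is well defined since $C$ is normal in $H$ and acts trivially on $D$. To rewrite the hypothesis $1_B=\Tr{R}{H}(a)$ as a trace identity in $D$, I would choose a transversal of $H/R$ that refines the surjection $H/R\twoheadrightarrow H/(RC)$. Setting $a':=\Tr{R}{RC}(a)\in B^{RC}\subseteq D^{\bar R}$, transitivity of relative traces gives the key identity
\[
1_D \;=\; \Tr{\bar R}{\bar H}(a') \quad\text{in } D,\qquad a'\in D^{\bar R},
\]
where $\bar R:=RC/C$. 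Note that the primitivity hypothesis automatically passes: $1_D=1_B$ is primitive in $D=B^C$.

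The next step is to exploit this primitivity. Since $D$ is finite dimensional over the algebraically closed field $k$ and $1_D$ admits no nontrivial orthogonal idempotent decomposition in $D$, $D$ is a local $k$-algebra, and in particular $D/J(D)\cong k$. The $\bar H$-action on $D$ is by $k$-algebra automorphisms and preserves the Jacobson radical, so it descends to an action on $D/J(D)\cong k$; but $k$ admits only the identity as a $k$-algebra automorphism, so $\bar H$ acts trivially on the residue field.

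To finish, reduce the displayed identity modulo $J(D)$. Writing $\bar a'\in k$ for the image of $a'$, the triviality of the $\bar H$-action on $k$ turns the relative trace into multiplication by the index, giving $1=[\bar H:\bar R]\cdot\bar a'$ in $k$. Hence $[\bar H:\bar R]$ is invertible in $k$, i.e.\ coprime to $p$, which is precisely the statement that $\bar R=RC/C$ contains a Sylow $p$-subgroup of $\bar H=H/C$. The only mildly delicate point is the initial translation of the trace from $H$ to $\bar H$, together with the standard fact that primitivity of the identity in a finite dimensional algebra over an algebraically closed field forces the algebra to be local with residue field $k$.
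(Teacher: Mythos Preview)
Your proof is correct and follows essentially the same approach as the paper's. Both arguments use transitivity of the relative trace to write $1_B=\Tr{RC}{H}(u)$ with $u=\Tr{R}{RC}(a)\in B^{RC}\subseteq B^C$, invoke primitivity of $1_B$ in $B^C$ to conclude that $B^C$ is local with residue field $k$, and then reduce modulo $J(B^C)$ (the paper does this by writing $u=\lambda 1_B+v$ with $v\in J(B^C)$, you by passing to $D/J(D)$) to extract the relation $1=[H:RC]\cdot\lambda$ in $k$; your passage to the quotient group $\bar H=H/C$ is a cosmetic repackaging of the same computation.
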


\proof  Let $b \in B^R$ be such that 
$$1_B  = \Tr{R}{H}(b) = \Tr{RC}{H} (\Tr{R}{RC}(b)),$$
and set  $u:=\Tr{R}{RC}(b)$. Then,
$u \in  B^{RC}\subseteq B^C$. By hypothesis, the identity $1_B= 1_{B^C}$ of  $B^C$ is the only idempotent of $B^C$. In other words, $B^C$ is a local algebra which means that $J(B^C)$ has co-dimension $1$ in  $B^C$.  Thus, we may write
$u = \lambda 1_B + v $ for some $\lambda \in k$ and $v \in J(B^C) $.
Thus, $$1_B  = \Tr{RC}{H} ( \lambda 1_B + v  )  =   [H: RC]\lambda 1_B + \Tr{RC}{H}(v).$$
Now, since $C$ is normal in $H$, $H$ acts  on $B^C$  and hence on $J(B^C)$.  In particular,
$\Tr{RC}{H}(v) \in J(B^C) $.    But $1_B \notin J(B^C)$.  Hence, it follows  from the above  displayed equation  that  $[H:RC]$  is not divisible by $p$, proving the lemma.

For $(A,b,G)$-Brauer pairs  $(Q,f) \leq (P,e) $, set  
$$ N_ G(P, e) :=  N_{(A, b, G)}((P,e))   :=\{ x \in G \, : \,  
\text{ \ such that \  }   \,^x(P,e) =(P,e) \}, $$
and $$ C_G(P,e) := N_G(P,e)\cap C_G(Q). $$  

\begin{lmm}  \label{centralbrauercondition}  Let  $H$  be a finite  group,  
$B$  a  $p$-permutation $H$-algebra and  $e$ a   primitive  idempotent  of   
$B^H$.    If $e  \in Z(B)$,  then   
for a  $p$-subgroup
$Q$ of $H$ and   a block   $f$ of $B(Q)$, $(Q,f)$ is an 
$(B, e, H)$-Brauer pair if and only if   $\Br_Q^B(e)f =f $. 
\end{lmm}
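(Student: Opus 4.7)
The plan is to exploit centrality of $e$ to show that $\Br_Q^B(e)$ is a central idempotent of $B(Q)$, and then read the equivalence off the block decomposition of $B(Q)$.

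First I would observe that since $e \in Z(B) \cap B^H$ and $Q \le H$, we have $e \in Z(B^Q)$. The Brauer map $\Br_Q^B : B^Q \to B(Q)$ is a surjective algebra homomorphism (see the definitions recalled in Section 2), so $\Br_Q^B(e)$ commutes with every element of $B(Q)$; that is, $\Br_Q^B(e) \in Z(B(Q))$. Moreover, since $e$ is an idempotent of $B^H$, $\Br_Q^B(e)$ is an idempotent of $Z(B(Q))$. Hence it decomposes as a finite sum of pairwise orthogonal blocks of $B(Q)$,
\begin{equation*}
\Br_Q^B(e) \;=\; \sum_{i} f_i,
\end{equation*}
where the $f_i$ are distinct blocks of $B(Q)$.

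For the ``only if'' direction, suppose $(Q,f)$ is an $(B,e,H)$-Brauer pair, so that $\Br_Q^B(e) \ne 0$ and $\Br_Q^B(e) f \ne 0$. Multiplying the above displayed decomposition by $f$ and using the orthogonality of distinct blocks of $B(Q)$, we get $\Br_Q^B(e) f = f_i f$, which is $0$ unless $f$ equals one of the $f_i$. Since $\Br_Q^B(e) f \ne 0$ by assumption, $f = f_i$ for some $i$, and then $\Br_Q^B(e) f = f_i^2 = f$, as desired.

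Conversely, if $\Br_Q^B(e) f = f$, then because $f$ is a block of $B(Q)$ it is in particular a nonzero idempotent; thus $\Br_Q^B(e) f = f \ne 0$, which forces both $\Br_Q^B(e) \ne 0$ and $\Br_Q^B(e) f \ne 0$, so $(Q,f)$ is an $(B,e,H)$-Brauer pair by definition.

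There is no real obstacle here; the only step worth flagging is the verification that $\Br_Q^B(e)$ is central in $B(Q)$, which is needed in order to apply the block decomposition of $Z(B(Q))$ and conclude via block orthogonality.
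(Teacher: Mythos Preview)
Your proof is correct and follows the same approach as the paper: both show that $e \in Z(B) \cap B^H \subseteq Z(B^Q)$, deduce that $\Br_Q^B(e)$ is a central idempotent of $B(Q)$, and then invoke block orthogonality to conclude that $\Br_Q^B(e)f$ is either $f$ or $0$. The paper's version is slightly terser, but the argument is identical in substance.
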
 
  
\begin{proof}    Suppose  that  $e  \in Z(B)$ and let 
$Q$ be a $p$-subgroup of $H$.   Since 
$$  Z(B)  \cap B^H \subseteq   Z(B) \cap  B^Q \subseteq Z(B^Q), $$  
$e$ is a central idempotet of $B^Q$.  Hence,    
either  $\Br_Q^B(e) =0 $ or 
$\Br_Q^B(e) $ is a central idempotent
of $B(Q)$  and   for any block $f$  of  $B(R)$,  either 
 $\Br_Q^B(e) f =f $,    or $\Br_Q^B(e)f =0 $. The result follows.  
\end{proof}

 For the next result, we note the following. For an $(A,b,G)$-Brauer pair $ (Q,e)$, 
$A(Q)$ is a $N_G(Q)$-algebra and $e$  is an idempotent of 
$A(Q)^{N_G(Q,e)} $. Thus, if $e$ is primitive in  
$ A(Q)^{C_G(Q,e)}$, then $e$ is  a primitive idempotent of 
$A(Q)^H$ for any $H$ such that $C_G(Q,e_Q) \leq H \leq N_G (Q, e_Q)$ and
it makes sense to speak of $(A(Q),e, H)$-Brauer pairs.

\begin{lmm}\label{Qtrivial}    Suppose  that 
$(Q,e)$   is  an  $(A,b,G)$-Brauer pair  such that $e$ is primitive 
in $A(Q)^{C_G(Q,e)}$  and let  $H$   be a subgroup of $G$ 
with  $C_G(Q, e) \leq  H \leq  N_G(Q,e )$.   

(i)  The $H$-poset of $(A(Q), e,  H)$-Brauer pairs is the  
$H$-subposet of 
$(A(Q), e,   N_G(Q,e))$-Brauer pairs  consisting of 
those pairs whose first component is 
contained in $H$. 

(ii)  The  map  $$(R,\alpha )  \to  (QR,  \alpha)$$  
is an  $H$-poset  homomorphism  from  
the  set of  $(A(Q), e, H) $-Brauer pairs to the set of 
$(AQ), e, QH)$-Brauer pairs and induces a  bijection  between 
the  set of  
$(A(Q), e, H) $-Brauer pairs whose first component contains  $Q \cap H$ and 
the  set of
$(A(Q), e, QH)$-Brauer  pairs  whose first component contains $Q$.

(iii)   If $Q \leq  H$, then  $(Q, e) $ is  the unique   
$(A(Q), e, H)$-Brauer pair    with first component $Q$ and 
 $(Q, e)  $ is contained   every maximal $(A(Q), e, H)$-Brauer pair.
\end{lmm}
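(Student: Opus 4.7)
The plan is to leverage throughout that $Q$ acts trivially on $A(Q)$, which together with the characteristic $p$ hypothesis renders Brauer quotients relative to $Q$ essentially transparent.

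Part (i) is tautological: an $(A(Q),e,X)$-Brauer pair is defined only in terms of the algebra $A(Q)$, the idempotent $e$, and a $p$-subgroup of $X$, while inclusion of Brauer pairs and the $X$-action by conjugation depend only on the algebra. Since every $p$-subgroup of $H$ is a $p$-subgroup of $N_G(Q,e)$ and the conjugation action of $H$ is restricted from that of $N_G(Q,e)$, (i) follows at once.

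The substance lies in (ii). I first establish the algebraic identity $A(Q)(R)=A(Q)(QR)$ for every $R\le H$ (so that $R$ normalises $Q$ and $QR$ is a $p$-group). Since $Q$ acts trivially on $A(Q)$, $A(Q)^{R}=A(Q)^{QR}$. For the trace ideals, any proper subgroup $S$ of $QR$ with $Q\not\le S$ contributes zero to $A(Q)_{<QR}^{QR}$, because $\Tr{S}{QR}$ factors through $\Tr{S}{SQ}$, and the latter applied to an element of $A(Q)$ equals multiplication by the positive $p$-power $[Q:Q\cap S]$, which vanishes in characteristic $p$; the remaining proper subgroups are of the form $S=TQ$ with $T<R$, and one has $\Tr{TQ}{QR}(A(Q)^{TQ})=\Tr{T}{R}(A(Q)^{T})$, while a parallel computation shows that those $T<R$ with $T\not\ge R\cap Q$ again contribute zero. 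Hence $A(Q)_{<R}^{R}=A(Q)_{<QR}^{QR}$, giving $\Br_R^{A(Q)}(e)=\Br_{QR}^{A(Q)}(e)$, so blocks of $A(Q)(R)$ and $A(Q)(QR)$ correspond and the Brauer pair conditions translate across. Preservation of inclusion and of the $H$-action is then immediate from their intrinsic definitions. For the bijection on the stated subsets, the mutually inverse maps are $R\mapsto QR$ and $S\mapsto S\cap H$: if $R\ge Q\cap H$, any $qr\in QR\cap H$ has $q\in Q\cap H\subseteq R$, whence $QR\cap H=R$; if $S\ge Q$, writing any $s\in S$ as $qh$ in $QH$ gives $h=q^{-1}s\in S\cap H$, whence $S=Q(S\cap H)$.

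Part (iii) follows quickly. Specialising (ii) to $R=Q$ yields $A(Q)(Q)=A(Q)$ with $\Br_Q^{A(Q)}=\id$, so a block $f$ of $A(Q)(Q)$ satisfying $\Br_Q^{A(Q)}(e)f\ne 0$ forces $ef\ne 0$, whence $f=e$ by orthogonality of primitive central idempotents; thus $(Q,e)$ is the unique Brauer pair with first coordinate $Q$. For any maximal $(A(Q),e,H)$-Brauer pair $(M,\mu)$, Theorem \ref{relatingBrauerwithtrace}(ii)(b) identifies $M$ as maximal amongst $p$-subgroups of $H$ with $\Br_M^{A(Q)}(e)\ne 0$; since $Q\le H$, the group $QM$ lies in $H$, and by the identity of (ii) $\Br_{QM}^{A(Q)}(e)=\Br_M^{A(Q)}(e)\ne 0$, forcing $QM=M$, i.e.\ $Q\le M$. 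Theorem \ref{IncTheorem}(i) then provides a unique $(Q,f)\le(M,\mu)$, which by the uniqueness already established must be $(Q,e)$. The one real technical obstacle is the trace-ideal computation in (ii); everything else is formal manipulation of the definitions.
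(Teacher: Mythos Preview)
Your proof is correct and follows the same overall strategy as the paper: all three parts rest on the fact that $Q$ acts trivially on $A(Q)$, so that $A(Q)^{R}=A(Q)^{QR}$ and $\Br_{R}^{A(Q)}=\Br_{QR}^{A(Q)}$. The paper simply asserts this last identity in one line, whereas you carry out the trace-ideal computation in detail; your argument there is sound (the key point being that subgroups $S<QR$ not containing $Q$, and subgroups $T<R$ not containing $R\cap Q$, contribute zero because the relevant relative trace is multiplication by a positive $p$-power). The one place where your route genuinely diverges is the final claim of (iii): the paper observes that $(Q,e)$ is $H$-fixed and that $H$ acts transitively on maximal $(A(Q),e,H)$-Brauer pairs (Theorem~\ref{relatingBrauerwithtrace}(i)), so containment in one maximal pair forces containment in all; you instead invoke the characterisation of maximal pairs in Theorem~\ref{relatingBrauerwithtrace}(ii)(b) to show directly that $Q\le M$ for any maximal $(M,\mu)$, then appeal to uniqueness. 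Both arguments are short and equally valid; the paper's is marginally more conceptual, yours avoids checking that $QM$ is a $p$-subgroup of $H$ only by already having done that work in (ii).
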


\begin{proof}  (i)
This is  immediate from the definitions.

(ii)  Since $Q$ acts trivially on $A(Q)$,  
for any   $p$-subgroup $R$ of $H$,  $A(Q)^R= A(Q)^{QR}$ and $\Br_{R}^{A(Q)} = 
\Br_{QR}^{A(Q)}$.  The first assertion is  immediate from this observation.  
The second assertion follows from the  first and the fact that 
$ R \to  QR$   is a bijection between subgroups of  $H$ containing 
$Q \cap H$ and subgroups of   $QH$ containing  $Q$.

(iii)  By hypothesis,  $A(Q)^Q= A(Q) $. Hence,  
$A(Q)_{<Q}^Q = 0 $ and  $\Br_Q^{A(Q)}$ is  the identity map on $A(Q)$. 
Thus,  the set of 
$(A(Q),e, H)$-Brauer pairs with first component $Q$  consists precisely of  
the pairs $ (Q, \alpha)$, where $\alpha$ is a  block   of $A(Q)$ such that 
$e\alpha \ne 0 $. Since   
$e$ itself is    a  block of $A(Q)$ and any two distinct blocks of 
$A(Q)$ are orthogonal, it follows that  $(Q, e)$  is an $(A(Q), e, H)$-Brauer 
pair and  that it is the  unique one with first component $Q$.    Since 
$\,^h(Q,e) =(Q,e)$  for all $h\in  H$  and  by Theorem 
\ref{relatingBrauerwithtrace}(a) $H$ acts transitively  on the set of maximal 
$(A(Q), e, H)$-Brauer pairs, $(Q,e)$ is contained in every maximal 
$(A(Q), e, H)$-Brauer pair.
\end{proof}

To prove that  a fusion system of a finite group $G$ on a Sylow $p$-subgroup  
$S$  of the group is saturated one  applies Sylow's theorem   
to   the local subgroups   $N_G(Q)$   and $N_S(Q)C_G(Q)$  of $G$,  for  $Q$  a $p$-subgroup of $G$.
The proof of Theorem \ref{saturationresult}  is based on the  same idea with    
triples of the form $(A(Q),e, N_G(Q,e_Q))$,  $(A(Q),e, N_P(Q)C_G(Q,e_Q))$ 
playing the role   of local subgroups and 
Theorem \ref{relatingBrauerwithtrace} and Lemma \ref{fullynormalisedSylowgen}  
playing the role of Sylow's theorem.    The next    result allows us to pass  back and forth 
between $(A, b, G)$-Brauer pairs and  $(A(Q),e, H)$-Brauer pairs.
Recall the  isomorphisms  $\siB_{R, Q}^A :A(Q)(R) \to  A(R)$   for 
$p$-subgroups $Q \unlhd R$ of $G$  introduced  at the  end of subsection 2.1.

\begin{lmm}  \label{localcompatibility}   Suppose that 
$(Q,e)$   is  an  $(A,b,G)$-Brauer pair  such that $e$ is primitive 
in $(A(Q))^{C_G(Q,e)}$  and let  $H$   be a subgroup of $G$ 
with  $QC_G(Q, e) \leq  H \leq  N_G(Q,e )$.

The  map  $$(R,\alpha )  \to  ( R,  \siB_{R,Q}^A(\alpha))$$  
is an $H$-poset isomorphism between the  subset of  
$(A(Q), e, H) $-Brauer pairs  consisting 
of those pairs whose first component contains  $Q$, and the subset of  
$(A, b, G)$-Brauer  pairs   containing  
$(Q,e)$  and  whose  first component is contained in $H$.  

In particular, $H$ acts transitively  on the subset of $(A, b, G)$-Brauer 
pairs which are maximal with respect to containing $(Q,e)$ and  having 
first component  contained in $H$.
\end{lmm}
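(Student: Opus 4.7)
The plan is to verify that the map $\Phi: (R, \alpha) \mapsto (R, \siB_{R,Q}^A(\alpha))$ is a well-defined $H$-equivariant bijection preserving inclusions, with inverse $\Psi: (R, f) \mapsto (R, (\siB_{R,Q}^A)^{-1}(f))$. First, since $H \le N_G(Q, e) \le N_G(Q)$, every $p$-subgroup $R$ of $H$ containing $Q$ satisfies $Q \unlhd R$, so the isomorphism $\siB_{R,Q}^A: A(Q)(R) \to A(R)$ from subsection 2.1 is defined; moreover, $e$ is primitive in $A(Q)^H$, since $A(Q)^H \subseteq A(Q)^{C_G(Q,e)}$ (as $C_G(Q,e) \le H$) and a primitive idempotent of a larger invariant subalgebra that happens to lie in a smaller one is still primitive there. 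Hence the triple $(A(Q), e, H)$ falls within the setting of Lemma \ref{Qtrivial}.

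For the well-definedness of $\Phi$, I will use that $e$ is a block of $A(Q)$ (hence central in $A(Q)$) together with $\Br_Q^A(b) e = e$ (forced by $(Q,e)$ being an $(A,b,G)$-Brauer pair and orthogonality of blocks of $A(Q)$), and the defining identity $\siB_{R,Q}^A(\Br_R^{A(Q)}(\Br_Q^A(x))) = \Br_R^A(x)$ of subsection 2.1. A short computation then yields $\Br_R^A(b)\,\siB_{R,Q}^A(\alpha) = \siB_{R,Q}^A(\alpha) \ne 0$, where I invoke Lemma \ref{centralbrauercondition} (applied to the $H$-algebra $A(Q)$ with central idempotent $e$) to get $\Br_R^{A(Q)}(e)\alpha = \alpha$. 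The containment $(Q, e) \le (R, \siB_{R,Q}^A(\alpha))$ then follows from Theorem \ref{IncTheorem}(ii), via the further identity $\Br_{R,Q}^A(e) = \siB_{R,Q}^A(\Br_R^{A(Q)}(e))$. The analogous verification for $\Psi$ is obtained by applying $(\siB_{R,Q}^A)^{-1}$ to the condition of Theorem \ref{IncTheorem}(ii), and $H$-equivariance of $\Phi$ is immediate from the transport formula $\,^h\siB_{R, Q}^A(w) = \siB_{\,^hR,Q}^A(\,^hw)$, valid for $h \in H \le N_G(Q)$.

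The step I expect to be most delicate is inclusion preservation: given $(R, \alpha) \le (R', \alpha')$ in the source poset, I must deduce $\Phi(R,\alpha) \le \Phi(R', \alpha')$ in the target. I reduce to the case $R \unlhd R'$ by refining $R \le R'$ to a subnormal chain of $p$-groups and using Theorem \ref{IncTheorem}(i) to propagate the second component uniquely up the chain. In the normal case, Theorem \ref{IncTheorem}(ii) rephrases each inclusion as a Brauer-map identity, and translating between the two identities is achieved by the compatibility of $\siB_{R',Q}^A$ with iterated Brauer maps through the two composite isomorphisms $A(Q)(R)(R') \cong A(Q)(R') \cong A(R')$ and $A(Q)(R)(R') \cong A(R)(R') \cong A(R')$. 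Once this is established, the final transitivity assertion follows from the $H$-poset isomorphism together with Theorem \ref{relatingBrauerwithtrace}(i) applied to $(A(Q), e, H)$, using Lemma \ref{Qtrivial}(iii) to guarantee that every maximal $(A(Q), e, H)$-Brauer pair contains $Q$ in its first component and hence lies in the domain of $\Phi$.
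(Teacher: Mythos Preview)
Your proposal is correct and follows essentially the same approach as the paper. Both establish well-definedness via Lemma~\ref{centralbrauercondition} and Theorem~\ref{IncTheorem}(ii), reduce inclusion-preservation to the normal case $R \unlhd R'$ and verify it through the compatibility $\siB_{R',Q}^A \circ \Br_{R',R}^{A(Q)} = \Br_{R',R}^A \circ \siB_{R,Q}^A$ on $A(Q)(R)^{R'}$, confirm $H$-equivariance via the transport formula for $\siB$, and deduce the final transitivity assertion from Theorem~\ref{relatingBrauerwithtrace}(i) together with Lemma~\ref{Qtrivial}(iii).
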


\begin{proof} Let 
${\mathcal P}_1$  be the  subset of  $(A(Q), e, H) $-Brauer pairs  consisting 
of those pairs whose first component contains  $Q$, and let 
${\mathcal P}_2$  be the subset of $(A, b, G)$-Brauer  pairs   containing  
$(Q,e)$  and  whose  first component is contained in $H$.  Since  
$H \leq N_G(Q,e) \leq N_G(Q)$,  ${\mathcal P}_1 $ and ${\mathcal P}_2 $ 
are  $H$-posets.
Now let $ Q \le R \le H$,   and let $\alpha $   be a   block  of  $A(Q)(R)$. 
By Lemma \ref {centralbrauercondition},  $ e=\Br^A_Q(b)e $, hence
 $$ \Br_{R,Q}^A(e)=   \siB_{R,Q}^A (\Br_{R}^{A(Q)}(e))  =   
\siB_{R,Q}^A  (\Br_{R}^{A(Q)} (\Br^A_Q(b)e))  = \Br_R^A(b) \Br_{R,Q}^A(e).$$   
Suppose first  that  $(R,\alpha)$ is an 
$(A(Q),e, H)$-Brauer pair. By Lemma \ref {centralbrauercondition},
$\alpha= \Br_{R}^{A(Q)}(e)\alpha $.  Applying  $\siB_{R,Q}^A  $ 
to both sides of this equation, and using 
the displayed equation  above, we get that 
$$ \siB_{R,Q}^A(\alpha) = \Br_{R,Q}^A(e) \siB_{R,Q}^A(\alpha)   
= \Br_R^A(b) \Br_{R,Q}^A(e) \siB_{R,Q}^A(\alpha).    $$  
In particular,  $\Br_R^A(b) \siB_{R,Q}^A(\alpha) \ne 0 $, whence  
$(R, \siB_{R,Q}^A(\alpha) )$ is  an $(A,b,G)$-Brauer pair.
By   Theorem \ref{IncTheorem} and the first equality above, 
$(Q, e) \leq (R,  \siB_{R,Q}^A(\alpha))$ as   $(A,b,G)$-Brauer pairs. 

Conversely, if $(Q, e) \leq (R,  \siB_{R,Q}^A(\alpha))$, then   again 
by   Theorem \ref{IncTheorem},  
$\siB_{R,Q}^A(\alpha) = \Br_{R,Q}^A(e) \siB_{R,Q}^A(\alpha)  $. Applying the 
inverse  of $\siB_{R,Q}^A $ yields that $\alpha= \Br_{R}^{A(Q)}(e)\alpha $,  
hence that  $(R,\alpha)$ is an 
$(A(Q),e, H)$-Brauer pair.
This shows that  $(R,\alpha) \rightarrow  (R,\siB_{R,Q}^A(\alpha))  $ 
is a bijection between  ${\mathcal P}_1 $ and ${\mathcal P}_2 $.

We show that the bijection is inclusion preserving.  
Let $(R,\alpha )$ and $(S,\beta)$ be  $(A(Q),e, H)$-Brauer pairs   with
$ Q \lhd  R \leq S $.  By    Theorem  \ref{IncTheorem}, 
it suffices to consider the case  that 
$ R \unlhd S$.   Clearly,  $\alpha $ is $S$-stable if and only if 
$\siB_{R,Q}^{A}(\alpha) $ is $S$-stable.   Further, the restrictions 
of the  maps
$ \siB_{S,Q}^A\circ \Br_{S,R}^{A(Q)} \circ \Br_R^{A(Q)}\circ\Br_Q^R  $ and 
$\Br_{S,R}^A \circ \siB_{R, Q}^A\circ \Br_R^{A(Q)}\circ\Br_Q^A $ to 
$A^S$ both equal  $\Br_S^A$.  Since  $\Br_R^{A(Q)}\circ\Br_Q^R (A^S) = A(Q)(R)^S $, it follows that 
$ \siB_{S,Q}^A\circ \Br_{S,R}^{A(Q)}  $ is equal to the restriction of 
$\Br_{S,R}^A \circ \siB_{R, Q}^A$ to  $A(Q)(R) ^S $. In particular, 
$\Br_{S,R}^{A(Q)} (\alpha)\beta = \beta $ if and only if 
$\Br_{S,R}^{A} ( \siB_{R,Q}^{A}(\alpha) )\siB_{S,Q}^{A}(\beta) =
\siB_{S,Q}^{A}(\beta)$. Thus, by Theorem \ref{IncTheorem}
$(R,\alpha)\leq  (S, \beta)$  if and only if  
$(R,\siB_{R,Q}^{A}(\alpha))\leq  (S, \siB_{S,Q}^{A}(\beta))$, 
and the bijection is inclusion preserving.  Since $Q $ is normal in $H$,  
$$ \siB_{\,^hR, Q} ^A (\, ^h \alpha)  =   \siB_{\,^hR, \,^hQ} ^A (\, ^h \alpha)
=   \,^h\siB_{R, Q} ^A ( \alpha)  $$ for all $h \in H$,  all  
$p$-subgroups  $R$ of $G$ containing  $Q$ as normal subgroup and  all 
$\alpha \in   A(Q)(R)$, and hence the above bijection  is compatible with the 
$H$-action on ${\mathcal P}_1 $ and ${\mathcal P}_2 $. This proves that the 
given map is an isomorphism of $H$-posets. In particular,  the  map induces a  
bijection between the    set of maximal elements    of 
${\mathcal P_1}$ and ${\mathcal P_2}$.
But  by Lemma \ref{Qtrivial} (c),  the set of maximal elements in  ${\mathcal P}_1 $ 
is    precisely the  set of maximal $(A(Q), e, H)$-Brauer pairs. 
The  final assertion follows from this  and  from the fact that $H$ acts 
transitively on the 
set of maximal $(A(Q), e, H)$-pairs (see \ref{relatingBrauerwithtrace} (a)).
\end{proof} 
 
We will  prove  Theorem \ref{saturationresult}  by  using the the saturation   axioms given by Robertson and Schpectorov     in \cite{robertson-shpectorov} .   For this we recall the following    terminology:  If $\Ff$  is a   fusion system on  a finite $p$-group $P$, then a subgroup  $Q$ of $P$ is  {\it fully  automized}  if $\Aut_{P}(Q)$   is  a Sylow $p$-subgroup of $\Aut_{\Ff}(Q)$   and $Q$ is {\it receptive}   if for  any   isomorphism $\varphi  \,: \, R \to Q $   in $\Ff$,   there exists a 
morphism $\hat \varphi  \, : \,  N_{\varphi}  \to  P $  in $\Ff$  such that   ${\rm  Res} |_R \hat \varphi  =\varphi  $, where $N_{\varphi} $   is  the subgroup of $N_P(R)$ consisting of   those  elements  $z \in N_P(R)$  such that  
$\varphi \circ   c_z  =  c_x \circ \varphi  $   for some $x \in N_P(Q)$.

\begin{lmm}  \label{fulautrec}   
Suppose that   $(A,b,G)$ is a saturated triple and let $(P, e_P)$ be 
a maximal $(A,b, G)$-Brauer pair.  For each $Q\leq P$ 
let $e_Q$ be the unique block of $A(Q)$ such that    
$(Q, e_Q) \leq (P, e_P)$ and  let $\Ff= \Ff_{(P, e_P)}(A,b, G) $.
If $Q\leq P$ is such that  $(N_P(Q), e_{N_P(Q)}) $ is 
maximal amongst  $(A, b, G)$-Brauer pairs   $(R, f)$ with    
$(Q, e_Q) \leq   (R, f)$ and $ R \leq N_G(Q, e_Q) $, then   
$Q$ is fully $\Ff$-automised and $\Ff$-receptive.\end{lmm}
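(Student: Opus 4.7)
The plan is to obtain fully-automization via the abstract Sylow-type Lemma \ref{fullynormalisedSylowgen}, and receptivity by combining the resulting Sylow embedding with the local transitivity furnished by Lemma \ref{localcompatibility}. Throughout, write $H := N_G(Q, e_Q)$ and $L := N_P(Q) C_G(Q, e_Q)$, so that $Q C_G(Q, e_Q) \leq L \leq H$, and let $B := e_Q A(Q) e_Q$, viewed as an $H$-algebra.

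To show $Q$ is fully $\Ff$-automized, I would invoke Lemma \ref{fullynormalisedSylowgen} with algebra $B$, group $H$, normal subgroup $C := C_G(Q, e_Q)$ (normal in $H$ because $C_G(Q) \lhd N_G(Q)$), and distinguished subgroup $N_P(Q)$. Saturation of $(A, b, G)$ makes $1_B = e_Q$ primitive in $B^C = e_Q A(Q)^{C_G(Q, e_Q)} e_Q$. The hypothesis on $(N_P(Q), e_{N_P(Q)})$ translates, via the bijection of Lemma \ref{localcompatibility} with $(A(Q), e_Q, H)$-Brauer pairs and the trace characterization of maximality in Theorem \ref{relatingBrauerwithtrace}(ii)(c), into $e_Q \in \Tr{N_P(Q)}{H}(A(Q)^{N_P(Q)})$; sandwiching by $e_Q$ gives $1_B \in \Tr{N_P(Q)}{H}(B^{N_P(Q)})$. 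Lemma \ref{fullynormalisedSylowgen} then forces $\Aut_P(Q) \cong N_P(Q) C / C$ to contain a Sylow $p$-subgroup of $\Aut_\Ff(Q) \cong H/C$, and being a $p$-group itself, $\Aut_P(Q)$ is such a Sylow.

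For receptivity, start with an $\Ff$-isomorphism $\varphi = \conj_g : R \to Q$ (with $R \leq P$ and $\,^g(R, e_R) = (Q, e_Q)$). The first key observation is that $\,^g N_\varphi \leq L$: the containment $\,^g N_\varphi \leq H$ follows from $N_\varphi \leq N_P(R) \leq N_G(R, e_R)$, and the defining property of $N_\varphi$ forces each $\,^g z$ (for $z \in N_\varphi$) to induce the same conjugation on $Q$ as some $x \in N_P(Q)$, hence to lie in $N_P(Q) C_G(Q)$, so its intersection with $H$ sits in $L$. Applying Lemma \ref{localcompatibility} with $L$ in place of $H$, the group $L$ acts transitively on the maximal $(A, b, G)$-Brauer pairs above $(Q, e_Q)$ whose first component lies in $L$, and $(N_P(Q), e_{N_P(Q)})$ remains maximal in this smaller collection (any strictly larger pair with first component in $L$ would also have first component in $H$, contradicting the lemma hypothesis). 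Hence there exists $h \in L$ with $\,^{hg}(N_\varphi, e_{N_\varphi}) \leq (N_P(Q), e_{N_P(Q)}) \leq (P, e_P)$, so that $\conj_{hg} \in \Hom_\Ff(N_\varphi, P)$.

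The main obstacle is forcing this candidate extension to restrict to $\varphi$ on $R$, which demands $h \in C_G(Q)$ rather than merely $h \in L$. I plan to resolve this by noting that $N_P(Q)$ stabilizes the pair $(N_P(Q), e_{N_P(Q)})$ (it normalizes its first component, and fixes $e_{N_P(Q)}$ by uniqueness of inclusion since $P$ fixes $e_P$), whence $L = C_G(Q, e_Q) \cdot N_P(Q) \subseteq C_G(Q, e_Q) \cdot \Stab_L((N_P(Q), e_{N_P(Q)})) \subseteq L$, forcing equality. A standard orbit-stabilizer count then shows the $C_G(Q, e_Q)$-orbit of $(N_P(Q), e_{N_P(Q)})$ coincides with its $L$-orbit, so $h$ may be chosen in $C_G(Q, e_Q) \leq C_G(Q)$. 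With this choice, $\conj_{hg}|_R = \conj_h \circ \varphi = \varphi$, and $\hat\varphi := \conj_{hg}$ is the required extension.
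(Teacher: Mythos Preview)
Your proof is correct and follows essentially the same approach as the paper: both obtain full automization by translating the maximality hypothesis through Lemma~\ref{localcompatibility} into the trace condition needed for Lemma~\ref{fullynormalisedSylowgen}, and both establish receptivity by showing $\,^g N_\varphi \leq N_P(Q)C_G(Q,e_Q)$ and invoking the local transitivity of Lemma~\ref{localcompatibility} at the level $L = N_P(Q)C_G(Q,e_Q)$. The only cosmetic difference is the final adjustment of $h$ into $C_G(Q,e_Q)$: the paper simply factors $h = nc$ with $n \in N_P(Q)$, $c \in C_G(Q,e_Q)$ and replaces $h$ by $c$ (using that $n$ stabilizes $(N_P(Q), e_{N_P(Q)})$), whereas you repackage the same observation as an orbit-stabilizer argument---both rest on the identical fact that $N_P(Q)$ stabilizes $(N_P(Q), e_{N_P(Q)})$.
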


\begin{proof}  Suppose that  $(N_P(Q), e_{N_P(Q)}) $ is 
 maximal amongst  $(A, b, G)$-Brauer pairs   $(R, f)$ such that   
$(Q, e_Q) \leq   (R, f)$ and  $ R \leq N_G(Q, e_Q) $. 
Let $$\alpha =  \siB^A_{N_P(Q), Q}(e_{N_P(Q)}). $$    By  
Lemma \ref{localcompatibility},  $(N_P(Q), \alpha)$  is a maximal 
$(A(Q), e_Q, N_G(Q, e_Q))$-Brauer pair.    Thus, 
by     Theorem \ref{relatingBrauerwithtrace} (b), 
$e_Q  \in \Tr{N_P(Q)} {N_G(Q,e_Q)}  (A(Q)^{N_P(Q)}) $.       
Since  $e_Q$ is central in   $A(Q)$, idempotent   and  an element of 
$A^{N_G(Q,e_Q)}$  
multiplying on both sides     by $e_Q$    gives that 
$$e_Q  \in  \Tr{N_P(Q)} {N_G(Q,e_Q)}  ( ( e_QA(Q)e_Q)^{N_P(Q)}) . $$   
Now, $C_G(Q, e_Q) $ is a  normal   subgroup of  $N_G(Q, e_Q)$  and   
since  $(A,b,G)$ is a saturated triple $e_Q$ 
is  a  primitive idempotent of $(A(Q))^{C_G(Q, e_Q)}  $ and  hence  also   of 
$(e_QA(Q)e_Q)^{C_G(Q, e_Q)}  $. Thus, by    Lemma \ref{fullynormalisedSylowgen} 
applied with  $B = e_QA(Q)e_Q$,  
$ H = N_G(Q, e_Q)$,  $C=C_G(Q,e_Q)$   and $R= N_P(Q)$,  we have that 
$N_P(Q)C_G(Q, e_Q)/C_G(Q, e_Q) $  
is a Sylow 
$p$-subgroup  of $N_G(Q, e_Q)/C_G(Q, e_Q) $.  
Since  $N_P(Q)C_G(Q, e_Q)/C_G(Q, e_Q) \cong  N_P(Q)/C_P(Q) \cong  \Aut_P(Q)$  
and  $N_G(Q, e_Q)/C_G(Q, e_Q)\cong  \Aut_{\Ff} (Q) $, it follows that  
$Q$ is fully $\Ff$-automised. 

It remains to show that $Q$ is $\Ff$-receptive. For this, 
we first observe that the hypothesis  on $Q$
implies that $(N_P(Q), e_{N_P(Q)}) $ is also maximal amongst  
$(A, b, G)$-Brauer pairs   $(R, f)$ such that   $(Q, e_Q) \leq   
(R, f)$ and $ R \leq  N_P(Q) C_G(Q, e_Q) $.  Hence, by Lemma  
\ref{localcompatibility},  now applied  with     $H=N_P(Q)(C_G(Q,e_Q) $,
$(N_P(Q), e_{N_P(Q)})$ contains   an $N_P(Q)C_G(Q,e_Q) $ conjugate of any 
$(A, b, G)$-Brauer pair   which contains $(Q,e_Q)$ and whose first 
component is contained in 
$N_P(Q)C_G(Q, e_Q)$.
Now  let $\varphi \, : \, R \to Q$ be an   isomorphism in $\Ff$, and let 
$ g \in G   $  induce $\varphi $, that is, 
$ \, ^g(R, e_R)  = (Q, e_Q)$ and $\varphi (x)=   gxg^{-1}$   for all $ x\in R$.
Then, it is an easy check that  $N_{\varphi} =  
N_P(R) \cap \,^{g^{-1}}N_P(Q)C_G(Q,e_Q) $.
Set $ N'=  \,^g N_{\varphi} =  \,^g N_P(R)  \cap N_P(Q)C_G(Q,e_Q) $,   
$e'_{N'}= \,^g{e_{ N_{\varphi}}} $  
and  consider the  $(A,b, G)$-Brauer  pair $ (N', e_N)$.  Since 
$(R, e_R) \leq (N_{\varphi}, e_{N_{\varphi}}) $, 
$(Q, e_Q) \leq  \,^g (N_{\varphi}, e_{N_{\varphi}}) = (N', e'_{N'})$.  Also, 
 $ N' \leq    N_P(Q) C_G(Q, e_Q)$. 
Thus, as pointed out above     $ \, ^h(N', e'_{N'}) \leq  
(N_P(Q), e_{N_P(Q)})$  for some 
$ h\in N_P(Q)C_G(Q, e_Q)$.  Multiplying by some element of $N_P(Q)$ 
if necessary,  we may assume that  $ h\in C_G(Q, e_Q)$.     
Since $\,^{hg}(N_{\varphi},  e_{N_{\varphi}})  \leq (P, e_P)$ and hence 
$\bar\varphi:=\conj_{hg} \, : \, N_{\varphi} \to  P$ is a  morphism in $\Ff$.  
and since   $ h \in C_G(Q,e_Q)$, $\bar\varphi $ extends $\varphi$.        
Thus $Q$ is $\Ff$-receptive.
\end{proof}
\

We now give the proof of Theorem \ref{saturationresult}. 
\

\begin{proof}  Keep the notation of the theorem, set  
$\Ff= \Ff_{ (P, e_P)}(A, b, G) $ and for each   
$Q\leq P$, let $e_Q$ be the unique block of $A(Q)$ such that    
$(Q, e_Q) \leq (P, e_P)$.    We have shown  
in Proposition \ref{permutationfusion}  that   
$\Ff$ is a fusion system on $P$.     Thus, by 
Lemma \ref{fulautrec}  nad by the saturation axioms of \cite{robertson-shpectorov} it suffices to show that 
 each subgroup of $P$ is 
$\Ff$-conjugate to a subgroup $Q$ of $P$ such that  $(N_P(Q), e_{N_P(Q)}) $ 
is maximal amongst  $(A, b, G)$-Brauer pairs   $(R, f)$ with    
$(Q, e_Q) \leq   (R, f)$ and $ R \leq N_G(Q, e_Q) $. So, let $ Q' \leq P$, 
and let  $(T,  \alpha) $ be  a maximal $(A(Q'), e_{Q'},  N_G(Q',e_{Q'}))$-
Brauer pair.  By  Lemma   \ref{Qtrivial} (c), $  Q'\leq T$.  
Let $f = {\siB^A_{R, Q}}^{-1}(\alpha)$. By Lemma \ref{localcompatibility},  $(T, f)$    
is an $(A, b, G)$-Brauer pair with $(Q', e_{Q'}) \leq (T, f)$.   
Since $(P,e_P)$ is a  maximal $(A,b, G)$-Brauer pair, we have
$$ \,^g(Q',e_{Q'}) \leq \,^g(T, f) \leq (P, e_P) $$ for some   
$ g\in G$.  Set $ Q =\, ^g Q'$. By the above,  
$\conj_g \,: \, Q' \to Q$  is  a morphism in $\Ff$, so 
$Q$ is  $\Ff$-conjugate to $Q'$. We will show that $(N_P(Q), e_{N_P(Q)})$  
has the required   maximality property. Note that by 
Lemma \ref{localcompatibility}, $(T, f)$ is  maximal amongst 
$(A,b,G)$-Brauer pairs which contain $(Q', e_{Q'})$ and whose first 
component is contained in $N_G(Q', e_{Q'})$. Thus, by transport of structure
  $\,^g(T, f)$ is maximal amongst 
$(A,b,G)$-Brauer pairs which contain $(Q, e_{Q})$ and whose first 
component is contained in $N_G(Q, e_{Q})$.    Since 
$\,^g(T, f) \leq (P, e_P) $,  $ \,^gT \leq N_P(Q) $ and 
$\,^g f =  e_{\,^gT} $.  Consequently, 
$ \,^g(T, f ) \leq (N_P(Q), e_{N_P(Q)})$. Since $(N_P(Q), e_{N_P(Q)})$  
contains $(Q, e_Q)$ and   $N_P(Q)  $  is contained in $N_G(Q,e_Q)$, 
the maximality of   $\,^g(T, f ) $ forces 
$ \,^g(T, f )= (N_P(Q), e_{N_P(Q)})$, and completes the proof of the theorem.
\end{proof}

\section{$p$-permutation modules  and saturation} \label{proofofmodsat}

Let  $G$ be a finite group,  $M$ an indecomposable $p$-permutation 
$kG$-module,   and $P$ a vertex of $M$ and set $A= \End_k(M)$.  
Then $A$ is a  $G$-algebra via  the  map 
$$  G \times  A \to   A, $$ sending the pair $(g, \phi)$ 
to the   element $\,^g\phi$   of $A$  defined by 
$$\,^g\phi(m)  =  g\phi(g^{-1}m), \   \   m\in M .$$ 
Since    $M$  is a $p$-permutation module, $M$ is a $p$-permutation 
$G$-algebra and since  $M$ is indecomposable, $1_A=id_M$ is  primitive in 
$ \End_k(M)^G$.  

\begin{prp}\label{connection}  With  the notation above, the
$(A, 1_A, G)$-Brauer pairs are the pairs  
$  (Q, 1_{A(Q)} ) $  such that $ M(Q) \ne 0 $ and  $(P, 1_{A(P)})$ 
is a maximal $(\End_k(M), 1_{\End_k(M)}, G)$-Brauer pair.  Further,

(i)   $\Ff_{(P, 1_{A(P)})} (A, 1_A, G) = \Ff_P(G) $.

(ii)  The triple $(A, 1_A, G)$ is of saturated type if and only if 
$M$ is Brauer indecomposable.

\end{prp}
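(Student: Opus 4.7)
The plan is to exploit the standard fact that, since $M$ is a $p$-permutation $kG$-module, there is a canonical isomorphism $A(Q) \cong \End_k(M(Q))$ of $N_G(Q)$-algebras for every $p$-subgroup $Q$ of $G$. Under this identification $\Br_Q^A(1_A)$ is $1_{A(Q)}$, so $\Br_Q^A(1_A) \ne 0$ if and only if $M(Q) \ne 0$. Moreover $\End_k(M(Q))$ is a matrix algebra, hence has $k$ as its center, so $1_{A(Q)}$ is its only nonzero central idempotent; thus the $(A, 1_A, G)$-Brauer pairs are exactly the pairs $(Q, 1_{A(Q)})$ with $M(Q) \ne 0$. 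Since $M$ is indecomposable with vertex $P$, a well-known property of $p$-permutation modules gives that $M(Q) \ne 0$ if and only if $Q$ is $G$-subconjugate to $P$; in particular $M(P) \ne 0$ and $P$ is maximal among $p$-subgroups $Q$ with $M(Q) \ne 0$, so Theorem \ref{relatingBrauerwithtrace}(ii)(b) yields that $(P, 1_{A(P)})$ is a maximal $(A, 1_A, G)$-Brauer pair.

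For (i), set $e_Q := 1_{A(Q)}$ for each $Q \leq P$. Given $g \in G$ with $\,^g Q \leq R \leq P$, Theorem \ref{IncTheorem}(i) supplies a unique block $f$ of $A(\,^g Q)$ with $(\,^g Q, f) \leq (R, e_R)$; since $e_{\,^g Q} = \,^g e_Q$ is the \emph{only} block of $A(\,^g Q)$, we must have $f = \,^g e_Q$, so the Brauer pair inclusion $\,^g (Q, e_Q) \leq (R, e_R)$ is automatic. Hence $\Hom_{\Ff_{(P, e_P)}(A, 1_A, G)}(Q, R) = \{\conj_g \, : \, Q \to R \mid g \in G,\ \,^g Q \leq R\} = \Hom_G(Q, R)$, proving (i).

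For (ii), part (i) of the saturated type definition is automatic because $1_A = \id_M$ is central in $A$. For the primitivity condition, note that $e_Q = 1_{A(Q)}$ is fixed by all of $N_G(Q)$, so $C_G(Q, e_Q) = C_G(Q)$. Via $A(Q) = \End_k(M(Q))$, we get $A(Q)^{C_G(Q)} = \End_{kC_G(Q)}(M(Q))$, and $e_Q$ is primitive there if and only if $M(Q)$ is indecomposable as a $kC_G(Q)$-module. Since $M(Q)$ carries a natural $k[QC_G(Q)/Q]$-module structure, and its $kC_G(Q)$-structure is the inflation along the surjection $C_G(Q) \twoheadrightarrow C_G(Q)/Z(Q) \cong QC_G(Q)/Q$ with kernel the normal $p$-subgroup $Z(Q)$ acting trivially, the two notions of indecomposability coincide. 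Finally, every $p$-subgroup $Q$ of $G$ with $M(Q) \ne 0$ is $G$-conjugate to a subgroup of $P$ and the $k[QC_G(Q)/Q]$-module structure transports under $G$-conjugation, so the primitivity condition holding for every $(A, 1_A, G)$-Brauer pair is equivalent to $M$ being Brauer indecomposable. The only mildly delicate step is this last bundling of the inflation identification with the vertex/conjugation reduction; everything else is formal from the identification $A(Q) \cong \End_k(M(Q))$.
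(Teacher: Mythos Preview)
Your proof is correct and follows essentially the same approach as the paper: both rely on the canonical identification $A(Q)\cong\End_k(M(Q))$ as $N_G(Q)$-algebras, the fact that a matrix algebra has a unique block, and the vertex characterization of when $M(Q)\ne 0$. You are a bit more explicit than the paper in noting that $C_G(Q,e_Q)=C_G(Q)$ and in spelling out the inflation $C_G(Q)\twoheadrightarrow QC_G(Q)/Q$ to pass between $kC_G(Q)$-indecomposability and $k[QC_G(Q)/Q]$-indecomposability, but the substance is identical.
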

  
\proof   Let $Q$ be a $p$-subgroup of $G$.  There is a natural action 
of $A(Q)$ on  $M(Q)$ which induces an isomorphism of $kN_G(Q)/Q$-algebras.  
between $A(Q)$ and $\End_k(M(Q))$
(see for instance \cite[Proposition 27.6]{thevenaz}). 
Since the identity element is the  only central idempotent of a matrix algebra, 
it follows that  the  $(A, 1_A, G)$-Brauer pairs are the pairs  
$  (Q, 1_{A(Q)} ) $  such that $ M(Q) \ne 0 $.  The maximality of 
$(P,1_{A(P)})$ is immediate from the fact that  $P$ is a vertex of $P$ 
and that $M(Q) \ne 0 $ if and only if  $Q$ is contained in a vertex of $M$ 
(see \cite[Corollary 27.6]{thevenaz}). Clearly,  
$\,^g1_{A(Q)} = 1_{A(\,^gQ)}$,  for any $ g\in G$ and  (i) is immediate 
from this. Under the  natural identification of  $A(Q)$ and $\End_k(M(Q))$  
$1_{A(Q)}= id_{M(Q)}$. Hence  $1_{A(Q)}$ is primitive in $ (A(Q))^{C_G(Q)} $ 
if and only if $M(Q)$ is an indecomposable $ kQC_G(Q)/Q$-module.   
The equivalence of (ii) is immediate  from this and the fact that $1_A$ 
is a central idempotent  of $A$ and hence of $A^G$.

\bigskip 

{\it Proof  of Theorem  \ref{saturationresultmod}.} 
In light of   Proposition  \ref{connection}, this is a special case of    
Theorem \ref{saturationresult}.

\bigskip

{\bf Remarks}  1. Let  $P$ be a $p$-subgroup  of $G$. Since  there 
exist indecomposable $p$-permutation $kG$-modules with vertex $P$, the 
analysis  before the statement  of    Theorem \ref{saturationresultmod}
shows that   given any $p$-subgroup $P$ of a finite group 
$G$, there exists a  $p$-permutation $G$-algebra $A$, 
and a primitive idempotent $b$ of $A^G$  such that  there is a maximal 
$(A,b,G)$-Brauer  pair, say $(P,e_P)$ with first component $P$ and  such that 
$\Ff_{(P,e_P)} (A, b, G ) = \Ff_P(G)$. On the other hand,  there exist  pairs 
$P,G $  where $G$ is a finite group and  $P$ is a $p$-subgroup of $G$ 
such that    $\Ff_P(G)$ is not a saturated system-for instance if 
$P$ is  a   non-Sylow $p$-subgroup of $G$ such that $N_{S}(P)$ strictly contains
$PC_S(P)$  for some Sylow $p$-subgroup  $S$ of $G$ containing $P$.   Thus, the  fusion system $\Ff_{(P, e_P)}(A,b, G)$ is not  always saturated.

2.  Suppose that   $b$ is a  (non-principal) block of $kG$ such that  a 
defect   group   $P$  of $kGb$   is a Sylow $p$-subgroups of $G$, but  $\Br_P^{kG}(b)$  is  a sum of more than one block of $kC_G(P)$. Let $M$  be  an indecomposable  $p$-permutation  module  $kG$-module in   the block $b$ and with vertex $P$.   Then,  since  $N_G(P) $ acts  transitively on  the set  ${\mathcal E} $  of blocks    $e$   of $kC_G(P) $   such that $\Br_P^{kG} (b) e = e $  and $M(P) \ne 0$,   $M(P)e \ne  0  $      for any $e \in {\mathcal E} $, and in particular, $M(P)$ is not indecomposable as    $kC_G(P)$-module.  However, since   $P$ is a Sylow $p$-subgroup  of $G$, $ \Ff_P(G)$  is  a  saturated fusion  system on $P$ (see \cite{brotolevioliver}).   Thus,   the converse of Theorem \ref{saturationresultmod} 
does not   hold in general.  Since Theorem \ref{saturationresultmod}  
is a special case    of Theorem \ref{saturationresult}, it follows also that 
the converse of  Theorem \ref{saturationresult} does not hold.
It    might be   that the methods of proof  of Theorem \ref{saturationresult}    can be refined   to  yield a
condition  on $(A,b,G)$   which  in   certain  situations (as in the one just discussed) is weaker than 
the condition of $(A,b,G)$   being a   saturated triple, and which in all cases  is necessary and sufficient    for 
the saturation of the corresponding fusion systems.

\bigskip
We now prove Theorem \ref{Scottcyclic}. We  need some lemmas. 
The following is well known.

\begin{lmm}\label{initial}  Let $H$ be a finite group and $N$ 
a normal subgroup of $H$ 
such that  $H/N $ is a $p'$-group. Then, the   restriction of the 
projective cover of the trivial $kH$-module to $kN $ is indecomposable.
\end{lmm}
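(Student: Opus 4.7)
The plan is to show that $P := P_H(k)$, the projective cover of the trivial $kH$-module, has simple head when viewed as a $kN$-module. Since the restriction of a projective $kH$-module along $N\le H$ is again projective (as $kH$ is $kN$-free), and a projective module with simple head is indecomposable (being the projective cover of that simple), this would finish the proof.

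By construction $P/J(kH)P \cong k_H$, so on restriction $(P/J(kH)P)|_N \cong k_N$. Since $P|_N$ is projective as a $kN$-module, $\operatorname{rad}(P|_N) = J(kN)\cdot P$. Thus the problem reduces to establishing the identity
$$ J(kH) \;=\; J(kN)\cdot kH, $$
for if this holds then $J(kH)P = J(kN)\cdot kH\cdot P = J(kN)\cdot P = \operatorname{rad}(P|_N)$, and hence $P|_N/\operatorname{rad}(P|_N) \cong k_N$, which is simple.

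To prove this identity, I would argue as follows. Since $N\lhd H$, the ideal $J(kN)$ is $H$-stable under conjugation, so $J(kN)\cdot kH = kH\cdot J(kN)$ is a two-sided ideal of $kH$, and $(J(kN)\cdot kH)^m = J(kN)^m\cdot kH = 0$ for $m$ large; hence $J(kN)\cdot kH \subseteq J(kH)$. Conversely, choosing a transversal of $N$ in $H$ exhibits $kH/(J(kN)\cdot kH)$ as a crossed product of $H/N$ over the semisimple $k$-algebra $kN/J(kN)$. Since $|H/N|$ is invertible in $k$, a Maschke-type averaging argument shows that this crossed product is semisimple, and therefore $J(kH) \subseteq J(kN)\cdot kH$.

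The only genuinely non-formal ingredient in the plan is the semisimplicity of the crossed product $(kN/J(kN)) *_\gamma (H/N)$, which I would invoke as a standard result from the theory of twisted group algebras over $p'$-groups rather than reprove from scratch; the remainder of the argument is bookkeeping around the radical identity.
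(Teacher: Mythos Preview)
Your proof is correct and follows essentially the same route as the paper's: both hinge on the identity $J(kH)=J(kN)\,kH$ (which the paper simply asserts ``under the hypothesis'' while you supply the crossed-product/Maschke justification), and both then read off that the head of $P|_N$ equals the restriction of the head of $P$, namely $k_N$. One minor remark: you don't actually need projectivity of $P|_N$ to conclude indecomposability---any finite-dimensional module with simple head is indecomposable---though the projectivity does yield the pleasant extra conclusion that $P|_N$ is the projective cover of $k_N$.
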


\proof  Under the hypothesis, $J(kG) =    J(kN) kH$.   Let $V$ be a projective
$kH$-module. Then,
$$  \Res{N}{H}  Rad(V) =  \Res{N}{H}  J(kH)V =  \Res{N}{H}  J(kN)kH V= 
\Res{N}{H}  J(kN)V   = Rad ( \Res{N}{H} V ) . $$
Consequently, 
$$  \Res{N}{H}  (V/ Rad(V) )=  \Res{N}{H}V/Rad ( \Res{N}{H} V ) . $$ 
The result is  immediate.

\bigskip

{\bf Remark.}  The  above indecomposability result holds for the 
projective cover of any simple  
$kH$-module whose restriction to  $N$ remains simple.

\bigskip

\begin{lmm}  \label{Sylowequiv} Let  $G$ be a finite group, 
$P$ a $p$-subgroup of $G$ and 
$M:= \Sc_P(G,k) $ the Scott module of  $kG$ relative to $P$.    

(i)  $M(P)$ is  
indecomposable as   $PC_G(P)/ P $-module if and  only if 
$N_G(P)/PC_G(P) $  is a $p'$-group.   

(ii) If $\Ff_P(G)$ is a 
saturated fusion system, then $M(P)$ is  
indecomposable as   $PC_G(P)/ P $-module.
\end{lmm}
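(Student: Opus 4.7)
The plan is to reduce both parts to the analysis of the restriction, from $N_G(P)/P$ to $PC_G(P)/P$, of the projective cover of the trivial module. The starting point is the standard identification $M(P)\cong\Sc_P(N_G(P),k)$, valid because the Brauer quotient of a Scott module at its vertex is the Scott module of the normalizer. Since $P\triangleleft N_G(P)$, this module is (the inflation of) the projective cover $P_k^{\bar H}$ of the trivial module over $\bar H:=N_G(P)/P$. Setting $\bar N:=PC_G(P)/P$, we have $\bar N\triangleleft\bar H$ with quotient $\bar H/\bar N\cong N_G(P)/PC_G(P)$, so (i) becomes the question: when is $P_k^{\bar H}\downarrow_{\bar N}$ indecomposable as a $k\bar N$-module?

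The ``if'' direction of (i) is immediate from Lemma \ref{initial} applied to $\bar N\triangleleft\bar H$. For the converse, I would argue by contrapositive: if $\bar H/\bar N$ has order divisible by $p$, then $P_k^{\bar H}\downarrow_{\bar N}$ decomposes. Since $\bar N$ is normal, the restriction is projective as a $k\bar N$-module, so it splits as $\bigoplus_T(P_T^{\bar N})^{m_T}$ over simple $k\bar N$-modules $T$, with $m_T=\dim\Hom_{k\bar N}(P_k^{\bar H}\downarrow_{\bar N},T)=\dim\Hom_{k\bar H}(P_k^{\bar H},\Ind{\bar N}{\bar H}T)$ by Frobenius reciprocity. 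Additivity of $\Hom_{k\bar H}(P_k^{\bar H},-)$ on short exact sequences (using projectivity of $P_k^{\bar H}$) identifies this with the composition multiplicity $[\Ind{\bar N}{\bar H}T:k]_{\bar H}$. A Clifford-type argument (any composition factor $V$ of $\Ind{\bar N}{\bar H}T$ has $V\downarrow_{\bar N}$ a sum of $\bar H$-conjugates of $T$, so $V=k$ forces $T=k$) shows $m_T=0$ for $T\neq k$. Hence $P_k^{\bar H}\downarrow_{\bar N}=(P_k^{\bar N})^{m_k}$ with $m_k=[k[\bar H/\bar N]:k]_{\bar H}=\dim P_k^{\bar H/\bar N}$ (the last equality being $[kG:k]_G=\dim P_k^G$ for any finite group $G$, via symmetry of the Cartan matrix), and this multiplicity exceeds $1$ precisely when $\bar H/\bar N$ fails to be a $p'$-group.

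Part (ii) is then formal: the ``fully automized'' saturation axiom applied to the $\Ff$-conjugacy class of $P$ (which is $\{P\}$, since $P$ is the whole $p$-group) forces $\Aut_P(P)=P/Z(P)$ to be a Sylow $p$-subgroup of $\Aut_{\Ff_P(G)}(P)=N_G(P)/C_G(P)$, whence $|N_G(P)/PC_G(P)|$ is coprime to $p$, after which (i) applies. The main obstacle is the Clifford-type identification in the ``only if'' step of (i); once the Green-correspondence description of $M(P)$ is in place, everything else is routine.
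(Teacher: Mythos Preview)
Your argument is correct. The ``if'' direction of (i) and all of (ii) match the paper's proof essentially verbatim (Lemma~\ref{initial} for the former, the fully-automized axiom for the latter). The difference is in the ``only if'' direction of (i). The paper dispatches this in one line by invoking Lemma~\ref{fullynormalisedSylowgen} with $B=\End_k(M(P))$, $H=N_G(P)$, $R=P$, $C=C_G(P)$: since $M(P)$ has vertex $P$ as $kN_G(P)$-module, Higman's criterion gives $1_B\in\Tr{P}{N_G(P)}(B^P)$, and indecomposability over $C_G(P)$ is exactly primitivity of $1_B$ in $B^{C_G(P)}$; the lemma then forces $PC_G(P)/C_G(P)$ to contain a Sylow $p$-subgroup of $N_G(P)/C_G(P)$.

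Your route---decomposing $P_k^{\bar H}\!\downarrow_{\bar N}$ via Frobenius reciprocity, using Mackey/Clifford to see that only the trivial projective can occur, and identifying the multiplicity with $\dim P_k^{\bar H/\bar N}$ through the symmetry of the Cartan matrix---is equally valid and entirely self-contained. The paper's approach is shorter here because Lemma~\ref{fullynormalisedSylowgen} is already available (it is the workhorse for the fully-automized part of Theorem~\ref{saturationresult}), so it costs nothing to reuse it. Your approach has the virtue of being purely module-theoretic and of giving more: you actually compute the restriction as $(P_k^{\bar N})^{\dim P_k^{\bar H/\bar N}}$, not just its decomposability status.
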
 

\proof   (i)   $M(P)$ is the  projective cover of the trivial 
$N_G(P)/P$-module and in particular is indecomposable as $kN_G(P)/P$-module.
The   forward implication  follows from Lemma \ref{fullynormalisedSylowgen}, 
applied with  $B=\End_k(M(P))$,  $H=N_G(P)$,  $R=P$ and  $C=C_G(P)$. 
The  backward  implication   is clear  from  Lemma  \ref{initial}. 
 
(ii) Suppose that  $\Ff_P(G)$ is a saturated fusion system. Then,  
$\Aut_P(P)$ is a Sylow $p$-subgroup of $\Aut_{\Ff}(P)$.  
On the other hand,  the  image of $\Aut_P(P)$ under the natural isomorphism  
from  $\Aut_{\Ff}(P)$     to $N_G(P)$     is $PC_G(P)/P$. Thus, 
$N_G(P)/PC_G(P) $ is a $p'$-group.      The result is immediate from (i).

\bigskip

\begin{lmm} \label{central} Let  $G$ be a finite group, 
$P$ a $p$-subgroup of $G$,
$M =\Sc_P(G,k) $ the Scott module of  $kG$ relative to $P$.  Suppose that 
$\Ff_P(G)$ is a saturated fusion system and let $ Q \leq Z(P)$.  
If $M(Q) $ is indecomposable as $kN_G(Q)/Q$-module, then   
$M(Q) $ is indecomposable as $kC_G(Q)/Q$-module.
\end{lmm}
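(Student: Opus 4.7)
The plan is to extract two structural facts from the saturation of $\Ff_P(G)$ combined with $Q \leq Z(P)$, and then to combine them with a Brauer-quotient/Clifford-theoretic argument.

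Since $N_P(Q) = P$ has maximum order, $Q$ is fully $\Ff$-normalized, so by saturation it is both fully $\Ff$-automized and $\Ff$-receptive. The first fact, combined with $\Aut_P(Q) = P/C_P(Q) = 1$, forces $\Aut_\Ff(Q) = N_G(Q)/C_G(Q)$ to be a $p'$-group, just as in the proof of Lemma~\ref{Sylowequiv}(ii). For the second fact, apply $\Ff$-receptivity to an arbitrary $\varphi \in \Aut_\Ff(Q)$: with $R = Q$, one checks that $N_\varphi = P$ (since $P$ acts trivially on $Q$ by conjugation), so $\varphi$ extends to some $\hat\varphi : P \to P$ in $\Ff$. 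Translated back to $G$, this yields the factorization
\[
N_G(Q) \ = \ C_G(Q) \cdot \bigl(N_G(Q) \cap N_G(P)\bigr).
\]

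Set $H = N_G(Q)/Q$, $C = C_G(Q)/Q$, $R = P/Q$, $\tilde H = N_H(R)/R$, and $\tilde C = N_C(R)/R$. Then $C \unlhd H$ and $\tilde C \unlhd \tilde H$ are normal subgroups of $p'$-index, and $H = C \cdot N_H(R)$. Let $V = M(Q)$; by hypothesis $V$ is $kH$-indecomposable. Since $M$ is a Scott module, $V$ has the trivial module in its head and has vertex $R$ in $H$. Standard $p$-permutation-module facts---using the isomorphism $\siB^A_{P,Q}$ of Section~2 to identify $V(R)$ with $\Res{\tilde H}{N_G(P)/P} M(P)$---then show that $V(R)$ is the projective cover of the trivial $k\tilde H$-module. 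Lemma~\ref{initial} applied to $(\tilde H, \tilde C)$ yields that $\Res{\tilde C}{\tilde H} V(R)$ is indecomposable as a $k\tilde C$-module.

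Now decompose $\Res{C}{H} V = W_1 \oplus \cdots \oplus W_r$ into indecomposable $kC$-summands. By Clifford theory the $W_i$ are all $H$-conjugate, and taking Brauer quotients at $R$ gives $V(R) = \bigoplus_i W_i(R)$ as $k\tilde C$-modules. The decisive step is to invoke the factorization $H = C \cdot N_H(R)$: writing each $h \in H$ as $c \cdot n$ with $c \in C$ and $n \in N_H(R)$, one sees that every $H$-conjugate of $R$ lying in $C$ is already $C$-conjugate to $R$. Consequently each $W_i$ has vertex $C$-conjugate to $R$, and each $W_i(R)$ is a non-zero indecomposable projective $k\tilde C$-module by the standard bijection between indecomposable $p$-permutation $kC$-modules with vertex $R$ and indecomposable projective $k\tilde C$-modules. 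Since $\bigoplus_i W_i(R)$ is $k\tilde C$-indecomposable, this forces $r = 1$, giving the required indecomposability of $\Res{C}{H} M(Q)$.

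I expect the main obstacle to be establishing the factorization $N_G(Q) = C_G(Q) \cdot (N_G(Q) \cap N_G(P))$ from saturation and $Q \leq Z(P)$; without it, one cannot exclude indecomposable $kC$-summands of $V$ whose vertex is $H$-conjugate but not $C$-conjugate to $R$, and such summands would have zero Brauer quotient at $R$, sabotaging the counting argument.
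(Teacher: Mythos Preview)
Your argument is correct and follows essentially the same line as the paper's. Both proofs hinge on the factorization $N_G(Q)=C_G(Q)\bigl(N_G(Q)\cap N_G(P)\bigr)$, obtained from the extension axiom applied to $Q\leq Z(P)$, and then show that every indecomposable $kC_G(Q)$-summand $W_i$ of $M(Q)$ has $W_i(P)\ne 0$; since $\bigoplus_i W_i(P)$ is the restriction of $M(P)$, its indecomposability over $PC_G(P)$ forces $r=1$.

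The differences are only in packaging. Where you invoke Clifford theory to see that the $W_i$ are mutually $H$-conjugate, the paper instead takes a $kC_G(Q)$-source $V'$ of $M(Q)$ and applies Mackey to $\Res{C_G(Q)}{N_G(Q)}\Ind{C_G(Q)}{N_G(Q)}V'$; the factorization then shows each $W_i\cong{}^xV'$ has vertex $C_G(Q)$-conjugate to $P$. For the endgame, the paper appeals directly to Lemma~\ref{Sylowequiv}(ii): since $PC_G(P)\leq C_G(Q)\cap N_G(P)$, the indecomposability of $M(P)$ as $kPC_G(P)/P$-module already gives the indecomposability you need over $\tilde C$, so the detour through Lemma~\ref{initial} applied to $(\tilde H,\tilde C)$ (and the accompanying claim that $V(R)$ is the projective cover of the trivial $k\tilde H$-module) can be bypassed. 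That claim is true, but note that it does not follow from the identification $V(R)\cong\Res{\tilde H}{N_G(P)/P}M(P)$ alone, since restriction of a projective cover need not stay indecomposable; one needs that $V$ is itself the Scott module $\Sc_R(H,k)$, which does hold here because $V$ is indecomposable $p$-permutation with vertex $R$ and trivial quotient.
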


\proof  Suppose    that 
$M(Q) $ is indecomposable as $N_G(Q)/Q$-module and set $L =N_G(Q)$ and 
$ C=C_G(Q)$.   Since $ Q\le Z(P)$  the extension axiom for  saturated 
fusion systems  implies   that 
$L=   C[N_G(P) \cap L] $. 
We consider $M(Q)$ as $kL$-module via inflation.
Since $M(Q)$ has vertex $P $ and $ P \leq C$, there exists 
an indecomposable  $p$-permutation   
$k C$-module $V$ with vertex $P$ such that   
$M(Q) $ is a direct summand of $\Ind{C}{L}V $.   
Let $W$ be an indecomposable  summand of $\Res{C}{L}\Ind{C}{L}V $.  
By the Mackey formula, $ W \cong \,^x V$ for some $ x\in L $.    
In particular, $ \, ^xP$ is  a vertex of $ \,^xV$.  
By the decomposition of $L$ given above,  $ x = uv  $ for some 
$u \in  C_G(Q)$, $ v\in N_G(P) $.  Thus, $ \, ^xP = \,^{u}P $ is 
$C$-conjugate to  $P$, and it follows that $P$ is a vertex of $W$. 
In particular, $W(P) \ne 0 $.
Let
$$ \Res{C}{L} M(Q) = W_1 \oplus \cdots \oplus W_s $$  be a decomposition of  
$M(P)$
as a  direct sum of indecomposable  $kC $-modules  and   suppose if 
possible that $ s > 1 $.  
 By the above argument, $W_i(P) \ne 0 $   for $i$, $1\leq i \leq s $, hence
$$ \Res{C\cap N_G(P)}{N_G(P)}M(P) \cong  (\Res{C}{L} M(Q))(P) =  
W_1(P) \oplus \cdots \oplus W_s(P) $$  is not indecomposable.  Since 
$C_G(P) \leq   C \cap N_G(P) $, it follows that $ \Res{C_G(P)}{N_G(P)}M(P)  $ 
is not indecomposable.   
But by the Sylow axiom for saturated fusion systems,   
$N_G(P)/PC_G(P) =\Aut_{\Ff}(P)$ is a  $p' $-group. This contradicts Lemma \ref
{Sylowequiv}.

\bigskip 

\

{\it Proof of  Theorem  \ref{Scottcyclic}.}   
Let $M=\Sc_P(G,k)$.
Suppose   that  $\Ff:=\Ff_P(G)$  is saturated and let 
$Q \leq P$.  
We will show that $M(Q)$ is indecomposable as 
$kC_G(Q)$-module. We proceed by induction on the index of $Q$ in $P$. 
If $Q=P$, then  by Lemma \ref{Sylowequiv}, $M(Q)$ is indecomposable as  $kPC_G(P)/P$-module.   
Suppose now that $Q $ is proper in $P$ and that   $M(R)$ is indecomposable as 
$kRC_G(R)/R$-module   for any $p$-subgroup 
$R$ 
of $P$ properly containing  $Q$.
Since $ P \leq N_G(Q)$,   $\Sc_P(N_G(Q), k)$ is a   direct summand  of  
$\Res{N_G(Q)}{G} M $
(see \cite[Chapter 4, Theorem 8.6]{Nagao-Tsushima}). 
Write
$$\Res{N_G(Q)}{G} M = \Sc_P(N_G(Q),  k)  \oplus  X. $$
We claim that 
$X(Q) = 0 $. 
Indeed, suppose if possible that 
there exists a direct summand, say $N$ of $X$ such that $N(Q) \ne 0 $ and let 
$ R$ be a vertex of $N$.  Since $Q$ is normal    in $N_G(Q)$, we have 
that $Q \leq R$.   The  group $Q$ is not a vertex of the indecomposable   
$kG$-module $M$.  Hence by the Burry-Carlson-Puig theorem 
(see \cite[Chapter 4, Theorem 4.6 (ii)]{Nagao-Tsushima}), 
$\Res{N_G(Q)}{G} M  $ does 
not  
have any indecomposable summand with vertex $Q$. 
Thus $ Q$ is a proper subgroup of $R$. On the other hand,  since 
$M$  is a summand of $\Ind{P}{G}k $, and $N$ is a summand of 
$\Res{N_G(Q)}{G} M $,  by the Mackey formula, $N$ is relatively   
$\,^x P \cap N_G(Q)$-projective  for some $x\in G$. Thus, 
$$ Q  < R < \,^x P   \text { \ and \ }    Q <  P. $$  
In particular, conjugation by 
$x$ is an $\Ff$-isomorphism from $\,^{x^{-1}}Q $ to  $Q$.     Now  $P$
is abelian and $\Ff$ is saturated.   So, by the extension axiom  
there exists  a $g\in N_G(P)$ such that  $gx^{-1} \in C_G(Q)$.   Setting 
$ h =gx^{-1}$, and conjugating all terms in the above by $h$, we get 
$$ Q= \,^hQ  < \,^hR < \,^{hx} P = \,^gP = P. $$
Since $h \in N_G(Q)$,  replacing  $R$ by $ \,^hR $, we may assume that 
$R \leq P$.       Since   $N$ is a summand of  $X$ and 
$ N(R) \ne 0 $, we have $X(R) \ne 0 $.
Since  $\Sc_P(N_G(Q),  k)$   has vertex   $P$ and $ R\leq P$, 
we also have that  $\Sc_P(N_G(Q),  k)(R)\ne 0$.
The equation
$$\Res{N_G(Q)}{G} M =    \Sc_P(N_G(Q),  k)  \oplus    X, $$
implies  that  $M(R)$ is not indecomposable as 
$k[N_G(Q)\cap N_G(R)]$-module. Since $ RC_G(R) \leq N_G(Q)\cap N_G(R)$,  
it follows that  $M(R)$ is not indecomposable as $kRC_G(R)$-module or 
equivalently as  $kRC_G(R)/R $-module, a contradiction.  
This proves the claim. Thus,
$$ M(Q) = \Sc_P(N_G(Q),  k)(Q)  \oplus  X(Q)  = \Sc_P(N_G(Q),  k) $$
as $kN_G(Q)$ and hence as $kN_G(Q)/Q$-module. In particular,
$ M(Q)$  is indecomposable as $kN_G(Q)/Q$-module. By Lemma \ref{central}, 
$ M(Q)$  is indecomposable as $kQC_G(Q)/Q$-module,  completing the proof.

\

{\it Proof of  Corollary \ref{Scottcycliccor}.}  If  $G$ 
has cyclic Sylow $p$-subgroups,   then  it is   easy to see that $ \Ff_P(G)$ 
is  saturated for any  $p$-subgroup $P$  of $G$. 
The result is immediate from  the Theorem  \ref{Scottcyclic}.

\

{\it  Proof  of  Corollary \ref{bimodulescott}.}   With the  hypothesis  of   the statement, it is  immediate that 
$\Ff_{ \Delta (P)}  (G_1\times G_2)  \cong  \Ff_P(G_1) 
$.    Thus,  since  $P$ is  Sylow in  
$G_1$,  $\Ff_P(G_1)$  and hence  $\Ff_{ \Delta (P)}  (G_1\times G_2)   $ is a   saturated fusion system  on $P$ (see \cite{brotolevioliver}).
The result follows   from 
Theorem  \ref{Scottcyclic}. 

\

Finally, we  prove Proposition  \ref{symevi}.  
For this we set up some   more notation and recall a few facts about 
Scott modules.
Let $ (K, \Oo , k)$-be a $p$-modular system 
(we  assume  here that $ k $  is an algebraic closure of the field of 
$p$-elements).    Let  $G=S_n $, and  let $P$ be a $p$-subgroup of 
$G$.  Let $M=\Sc_P(G,k)$   
be the $kG$-Scott module with  vertex   $P$ and  let 
$\tilde M   = \Sc_P(G,\Oo)$     be the   $\Oo G$-Scott module  
with  vertex $P$, so that $ M =  k \otimes_{\Oo} \tilde M $. 
Let  $\chi: \tilde M \to K$   be the  character of the $\Oo G$-module 
$\tilde M $.  Since $\tilde M$ is a
$p$-permutation  $ \Oo G$-module, 
for any $p$- element   $x$ of $G$,  
$\dim_k M( \langle x \rangle )=  \chi(x) $. In particular,  if $Q$ is a 
$p$-subgroup of $G$, then
$\dim_k M( Q )  \leq   \chi(x)  $ for any  element $x$ of $Q$, with 
equality if  $Q=\langle x \rangle $.

\bigskip

{\it Proof  of  Proposition \ref{symevi}.}    Suppose that 
$n\leq 6$ and   that $\Ff_P(G)$ is saturated.
We will show that $M(Q)$ is  indecomposable  as $kC_G(Q)/Q$-module 
for  every  subgroup $Q$ of $P$.  By Theorem \ref{Scottcyclic}, 
we may assume that $P$ is not  abelian. If $P$ is a Sylow $p$-subgroup of $G$, 
then $M=k $ \cite[Theorem 2.5]{broue1}  and the result is immediate.  
So, we may assume that $P$ is a non-abelian, non-Sylow $p$-subgroup of $G$. 
Consequently, $p=2 $,  $ n =6 $ and  
$P$ is isomorphic to  the dihedral group of order $8$.

By the Sylow axiom for  saturated 
fusion systems,  $PC_G(P)$ is a Sylow $2$-subgroup of $N_G(P)$.   So,
up to  $G$-conjugacy $P$ is one of 
\
$ \langle (1,2,3,4), (1,3)\rangle, $
\
$ \langle (1,2,3,4)(5,6), (1,3)\rangle $
\
$ \langle (1,2,3,4), (1,3)(5,6)\rangle $
\
or
\
$\langle (1,2,3,4)(5,6), (1,3)(5,6)\rangle. $

We will  show that  in each case above, $M$ is Brauer indecomposable. 
It can be checked  directly that    $\Ff_P(G)$ is saturated in each case above-
the second case corresponds to the nilpotent fusion system, the remaining three
correspond to  the  saturated fusion system  on $D_8$  
in which  the automorphism  of exactly one Klein-$4$ subgroup   
contains an element of order $3$.  However,
we do not prove saturation   as  by  Theorem \ref{saturationresultmod} 
this will follow after the  fact of Brauer indecomposability.

Before embarking on  our case by case analysis, we recall the  
$2$-decomposition matrix  of $S_6$ \cite[Page 414]{jameskerber}:

\[ \begin{array}{clcccc}
  &  &   1 & 4_1 & 4_2 &16 \\
  &   &  (1) & (5, 1) & (4,2) & (3,2,1) \\
1 & (6) & 1  & & & \\
5 & (5,1) & 1 & 1 & & \\
9 & (4,2) & 1 & 1 & 1 & \\
16 & (3,2,1) & & & & 1 \\
10 & (4,1^2) & 2 & 1 & 1 &\\
5 & (3^2) & 1 & & 1  & \\
10 & (3, 1^3) & 2 &1 & 1 & \\
5& (2^3) & 1 & & 1 & \\
9 & (2^2, 1^2) & 1 & 1 & 1 & \\
5& (2, 1^4)&  1 & 1 & & \\
1 & (1^6) &1 & & & 
\end{array}\]

{\bf\underline {Case:}}  $P=  \langle (1,2,3,4), (1,3)\rangle $.
Then $P$ is a Sylow $p$-subgroup of $S_5 $, naturally considered as a 
subgroup of  $S_6 $ as a one-point stabilizer,
whence  $\tilde M$ is a direct summand of  $\Ind{S_5}{S_6}  (\Oo) $  
(see for instance \cite[Theorem 2.5]{broue1}).
On the other hand by   \cite[Page 32]{craven1},  $M$  
has dimension $6$. So, $\tilde M = \Ind{S_5}{S_6}  (\Oo)$.
Now, if    $ u= (1,3) $, then 
$ \chi  (u) = 4 $ and 
if  $u  =(1,2 )(3,4) $ or  $u= (1,2,3,4) $ then  $\chi(u)= 2 $. Hence, 
it follows that  unless $ Q \le P $ is $G$-conjugate to 
$\langle (1,3) \rangle $,  the  dimension of $ M(Q) \leq 2  $ and if 
$ Q=\langle (1,3) \rangle $, then $M(Q) $ has dimension $4$.  
On the other hand, since $M(P)$   as $kN_G(P)/P $-module is the  
projective cover of the trivial module,
$M(P)$ has dimension at least $2$.  So, if  $ Q \le P $ is  not 
$G$-conjugate to 
$\langle (1,3) \rangle $,  then  for any  $R \le P$  containing  
$Q$  as a normal subgroup,
$M(Q) \cong M(R)$ as  $k(N_G(Q) \cap N_G(R))$-module, hence as  
$kC_G(R)$-modules.  Arguing inductively, it follows  that  
$M(Q)  \cong M(P) $ as $kC_G(P) $-modules.   By   Lemma \ref{Sylowequiv},   
$M(P) $ is indecomposable  as $kPC_G(P)/P$-module, hence as  $kC_G(P)$-module.
Since $C_G(P) \le QC_G(Q)$, it follows that $M(Q) $ is indecomposable as 
$kQC_G(Q)/Q$-module. 

Now suppose that $ Q =  \langle (1,3) \rangle $. Then 
$ M(Q) $ is a  $4$-dimensional   $p$-permutation $ kN_G(Q)$-module.
Let $V$ be an indecomposable $kN_G(Q)$-module summand of $M(Q)$ and let 
$ Q \le   R \le N_G(Q)$ be a vertex of  $M(Q)$. Then 
$$ M(R)=  M(Q) (R) \ne 0, $$ whence $ \,^gQ \le  \,^gR  \le P$ or 
$ \,^g R \le N_P(\,^gQ)$.   Since no transposition in $P$ is  central in $P$, 
$R$  has order at most $4$ (and  for some summand  $V$ exactly  $4$).   
Let $S$ be a Sylow $p$-subgroup of $N_G(Q)$ containing $R$.     Since 
$V $ is a direct summand   of $\Ind{R}{N_G(Q)} (k)$, the  Mackey formula and 
the Green indecomposability theorem imply that  any direct summand of  
 $\Res{S}{N_G(Q)} V$ is  isomorphic to  $\Ind{\,^xR \cap S}{S} k $ for  some 
$x \in N_G(Q)$. In particular,  the dimension of    $V$ is divisible 
by  the index of $R$ in $V$.
Since  the Sylow $p$-subgroups
of $C_G(Q)=N_G(Q)$ have order $16$ and $R$ has   order $8$,  
it follows that  $V$ has dimension divisible by $4$. 
Thus, $V=M(Q)$.  In particular, $M(Q)$ is indecomposable   as  $kN_G(Q)$, 
and  $N_G(Q)=C_G(Q)$.

{\bf \underline{ Case:}} $P=  \langle (1,2,3,4), (1,3)(5,6)\rangle $.
By \cite{craven1} $M $  has composition factors
$1_G$, $4_1 \oplus 4_2 $,  $ 1_G$.    An inspection 
of the decomposition matrix and   the character table of $S_6$ gives that 
$ \chi =  \chi_{(6)} + \chi_{(4,2)} $. Further, the values of $\chi $ 
on non-trivial  
$2$  elements of $G$  are as follows:
$$\chi((1,3))=4,  \  \chi((1,3)(2,4))=2, \    \chi((1,2)(3,4)(5,6))=4,$$  
$$  \chi((1,2,3,4))= 0, \  \chi((1,2,3,4)(5,6)) =  2 . $$

Since   $ C_G(P) /Z(P) $  contains  an element of order $2$,  
it follows  as in the previous case that
$M(Q)$ is indecomposable as $kC_G(Q)$-module  for any $p$-subgroup $Q$ of 
$G$ such that $M(Q)$ has dimension $2$. From the above character calculations, 
we  may assume that the only non-trivial elements  of $ Q$ 
are in the $G$-conjugacy class of  $(1, 3)$ and $(1,2)(3,4)(5,6) $  
and  in particular are non-central involutions in $P$.  If $ Q$ 
contains two such involutions, then $ Q  = P$, so we may assume  that either 
$Q =  \langle (1,3) \rangle  $ or  $Q =  \langle (1,2)(3,4) (5,6) \rangle  $. 
But now the result  follows   as above since both of these 
involutions are central in some Sylow $p$-subgroup  and  in both cases  
$M(Q)$ has dimension $4$.

{\bf\underline{ Case:}} $P=  \langle (1,2,3,4), (1,3)(5,6)\rangle $.
The   image of $P$ under the exceptional  non-inner automorphism of 
$S_6$  is  $S_6$-conjugate to   $\langle (1,2,3,4), (1,3)\rangle $. 
The   result follows
from Case 1 by transport of structure.

{\bf \underline{ Case:}} $P= \langle (1,2,3,4)(5,6), (1,3)(5,6)\rangle $. 
By \cite{craven1} $M$   is two dimensional with composition factors
$1_G$, $ 1_G$.   Since $M(P)$  has dimension at least $2$,  
$ M(Q)= M(P)=M$  for all $Q \leq P$. By Lemma \ref{Sylowequiv}, $M=M(P)$ is 
indecomposable  as $kPC_G(P)/P$-module.  Hence,   $M(Q)=M$
 is indecomposable as  $kQC_G(Q)/Q$-module  for all $Q\leq P$ as required.
This completes the proof of  Proposition \ref{symevi}.

\bigskip

{\bf   Concluding Remarks.} 
Given a saturated fusion system, $\Ff$ on a finite $p$-group $P$, 
Park has shown that   there exists a finite group $G$ with $P \leq G$ 
and such that $\Ff= \Ff_P(G)$ (cf.\cite{Park:realising}). We    pose the following question:

{\it Given a saturated fusion system  $\Ff$ on a finite $p$-group 
$P$, does there  exist  a  saturated triple $(A,b, G)$   such that 
 $\Ff=\Ff_{(P,e_P)}(A,b, G) $  for some   maximal $(A,b,G)$- Brauer pair $(P,e_P)$ ?
}

\

{\bf Acknowledgments}
The second author
was partially  supported by the Ministry of Education, Culture, Sports and Technology (MEXT) Grant-in-Aid for Young Scientists (B)19740018, 2007-2009, and also Grant-in-Aid for 
Young Scientists (B) 22740025, 2010-2012.

\addtocontents{toc}{\contentsline {chapter}{Bibliographie}{62}}
\bibliographystyle{amsalpha}  %

\end{document}